\documentclass[11pt,twoside]{amsart}
\usepackage{amsmath, amsthm, amscd, amsfonts, amssymb, graphicx, color, alltt}
\usepackage[bookmarksnumbered, plainpages]{hyperref}
\addtolength{\topmargin}{-1.5cm}
\linespread {1.3}
\textwidth 17cm
\textheight 23cm
\addtolength{\hoffset}{-0.3cm}
\oddsidemargin 0cm
\evensidemargin 0cm
\setcounter{page}{1}


\newtheorem{thm}{Theorem}[section]

\newtheorem{lem}[thm]{Lemma}

\newtheorem{conj}[thm]{Conjecture}

\numberwithin{equation}{section}
\usepackage[utf8]{inputenc}
\usepackage{pstricks-add}

\pagestyle{plain}

\title{\bf Towards the Classification of Finite Simple Groups with exactly Three or Four Supercharacter Theories}

\author{\bf  A. R. Ashrafi and F. Koorepazan-Moftakhar}

\address{Department of Pure Mathematics, Faculty of Mathematical Sciences,
University of Kashan, Kashan 87317$-$53153, I. R. Iran}

\date{}
\begin{document}

\maketitle

\begin{abstract}
A supercharacter theory for a finite group $G$ is a set of
superclasses  each of which is a union of conjugacy classes
together with a set of sums of irreducible characters called
supercharacters that together satisfy certain compatibility
conditions. The aim of this paper is to give a description of some
finite simple groups with exactly three or four supercharacter
theories.

\vskip 3mm

\noindent{\bf Keywords:} Sporadic group, alternating group, Suzuki group.

\vskip 3mm

\noindent \textit{2010 Mathematics Subject Classification:} Primary: $20C15$; Secondary: $20D15$.
\end{abstract}

\bigskip

\section{Introduction}
Throughout this paper $G$ is a finite group,  $d(n)$ denotes the
number of divisors of positive integer $n$, $Irr(G)$ stands for
the set of all ordinary irreducible character of $G$ and $Con(G)$
is the set of all conjugacy classes of $G$. For other notations
and terminology concerning character theory, we refer to the
famous book of Isaacs \cite{11}. Following Diaconis and Isaacs
\cite{7}, a pair ($\mathcal{X}$, $\mathcal{K}$) together with the
choices of characters $\chi_X$ is called a \textbf{supercharacter
theory} of $G$ if the following conditions are satisfied:
\begin{enumerate}
\item  $\mathcal{X}$ is a partition of $Irr(G)$ and  $\mathcal{K}$ is a partition of $Con(G)$;

\item $\{ 1\} \in \mathcal{K}$;

\item the characters $\chi_X$, $X \in \mathcal{X}$, are constant on the members of $\mathcal{K}$;

\item $|\mathcal{X}|$ = $|\mathcal{K}|$.
\end{enumerate}
The elements of  $\mathcal{X}$ and $\mathcal{K}$  are called
\textbf{supercharacters} and \textbf{superclasses} of $G$,
respectively. It is easy to see that $m(G) = (Irr(G),Con(G))$ and
$M(G) = (\mathcal{X}, \mathcal{K})$, where $\mathcal{X}$ = $\{ \{
1\},  Irr(G) \setminus \{ 1\}\}$ and $\mathcal{K}$ = $\{
\{1\},Con(G) \setminus\{ 1\}\}$ are supercharacter theories of
$G$ which are called the trivial supercharacter theories of $G$.
The set of all supercharacter theories of a finite group $G$ is
denoted by $Sup(G)$ and set $s(G) = |Sup(G)|$. These notions were
first introduced by  Andre for finite unitriangular groups using
polynomial equations defining certain algebraic varieties
\cite{0,1,2}.

Following Hendrickson \cite{9}, we assume that $Part(S)$ denotes
the set of all partitions of a set $S$. If $\mathcal{X}$ and
$\mathcal{Y}$ are two elements of $Part(S)$ then we say that
$\mathcal{X}$ is a \textbf{refinement} of $\mathcal{Y}$ or
$\mathcal{Y}$ is \textbf{coarser than} $\mathcal{X}$ and we
write  ``$\mathcal{X}$ $\preceq$ $\mathcal{Y}$", if
$[a]_{\mathcal{X}} \subseteq [a]_{\mathcal{Y}}$ for all $a \in
S$. For two supercharacter theories $(\mathcal{X},\mathcal{K})$
and $(\mathcal{Y},\mathcal{L})$, we define
$(\mathcal{X},\mathcal{K}) \vee (\mathcal{Y},\mathcal{L})$ =
$(\mathcal{X} \vee \mathcal{Y}, \mathcal{K} \vee \mathcal{L})$.
It is well-know that $(Part(S),\preceq)$ is a lattice which is
called the \textbf{partition lattice} of $S$. By
\cite[Proposition 2.16]{10}, the join  of two supercharacter
theories of a group $G$ is again a supercharacter theory for $G$,
but it is possible to find a pair of supercharacter theories such
that their meet is not a supercharacter theory. This shows that
the set of all supercharacter theories of a finite group with
usual join and meet don't constitute a lattice in general.

Burkett et al. \cite{6} gave a classification of finite groups
with exactly two supercharacter theories. They proved that a
finite group $G$ has exactly two supercharacter theories if and
only if $G$ is isomorphic to the cyclic group $Z_3$, the
symmetric group $S_3$ or the simple group $Sp(6, 2)$. The aim of
this paper is to continue this work towards a classification of
finite simple groups with exactly three or four supercharacter
theories.

If $p$ and $2p + 1$ are primes then $p$ is called a
\textbf{Sophie Germain prime} and $2p + 1$ is said to be a
\textbf{safe prime}. The safe primes are recorded in on-line
encyclopedia of integer sequences as A005385, see \cite{13} for
details. The first few members of this sequence is $5,$ $7$,
$11$, $23$, $47$, $59$, $83$, $107$, $167$, $179$, $227$, $263$,
$347$, $359$, $383$, $467$, $479$, $503$, $563$, $587$, $719$,
$839$, $863$, $887$, $983$, $1019$, $1187$, $1283$, $1307$,
$1319$, $1367$, $1439$, $1487$, $1523$, $1619$, $1823$, $1907$.
These two sequences of prime numbers have several applications in
public key cryptography and primality testing and it has been
conjectured that there are infinitely many Sophie Germain primes,
but this remains unproven \cite{65}.

\section{Supercharacter Theory Construction for Sporadic  Groups}

The aim of this section is to compute some supercharacter
theories for sporadic simple groups. We use the following simple
lemma in our calculations:

\begin{lem}\label{11}
Let $G$ be a finite group, $\chi$ be a non-real valued irreducible
character of $G$ and $x \in G$ such that $\chi(x)$ is a non-real
number. We also assume that each row and column of $G$ has at most
two non-real numbers. Define:
\begin{eqnarray*}
\mathcal{X} &=& \{\{\chi_{1}\}, \{\chi, \overline{\chi}\}, Irr(G) - \{\chi_{1}, \chi, \overline{\chi}\}\}\\
\mathcal{K} &=& \{\{e\}, \{x^{G}, (x^{-1})^{G}\}, Con(G) - \{e,
x^{G}, (x^{-1})^{G}\}\}.
\end{eqnarray*}
Then {$ (\mathcal{X}, \mathcal{K}) $} is a supercharacter theory
for $G$.
\end{lem}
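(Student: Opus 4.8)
The plan is to verify the four defining conditions directly for the standard choice of supercharacters $\sigma_X = \sum_{\psi \in X}\psi(1)\psi$, $X \in \mathcal{X}$; with this choice condition 3 becomes the single requirement that each $\sigma_X$ be constant on every member of $\mathcal{K}$. Writing $X_3 = Irr(G)\setminus\{\chi_1,\chi,\overline{\chi}\}$ and $K_3 = Con(G)\setminus\{e,x^G,(x^{-1})^G\}$, the three supercharacters are $\sigma_1 = \chi_1$, $\sigma_2 = \chi(1)(\chi+\overline{\chi})$ and $\sigma_3 = \sum_{\psi\in X_3}\psi(1)\psi$.

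Conditions 1, 2 and 4 are essentially bookkeeping. Since $\chi$ is non-real we have $\chi\neq\overline{\chi}$, and as $\chi_1$ is real-valued it lies in neither $\{\chi,\overline{\chi}\}$; hence the blocks of $\mathcal{X}$ are pairwise disjoint and exhaust $Irr(G)$. Dually, from $\chi(x)\notin\mathbb{R}$ I would first note that $x$ cannot be conjugate to $x^{-1}$, for otherwise $\chi(x)=\chi(x^{-1})=\overline{\chi(x)}$ would be real, and likewise $x\neq e$; therefore $\{e\}$, $\{x^G,(x^{-1})^G\}$ and $K_3$ are genuine distinct blocks partitioning $Con(G)$. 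This yields condition 1, condition 2 (as $\{e\}=\{1\}$), and condition 4, the two partitions having the same number of blocks.

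The content is condition 3. On $\{e\}$ there is nothing to check. On $\{x^G,(x^{-1})^G\}$ I would exploit the identity $\psi(x^{-1})=\overline{\psi(x)}$, valid for all $\psi\in Irr(G)$: for $\sigma_2$ this gives $(\chi+\overline{\chi})(x)=\chi(x)+\overline{\chi(x)}=(\chi+\overline{\chi})(x^{-1})$, while for $\sigma_3$ the column hypothesis is decisive, since the column of $x$ already carries the two non-real entries $\chi(x)$ and $\overline{\chi}(x)=\overline{\chi(x)}$, forcing $\psi(x)\in\mathbb{R}$, and hence $\psi(x^{-1})=\overline{\psi(x)}=\psi(x)$, for every $\psi\in X_3$; thus $\sigma_3(x)=\sigma_3(x^{-1})$. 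So constancy on the second superclass follows from the hypothesis together with the inversion symmetry $g\mapsto g^{-1}$.

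The main obstacle is constancy on $K_3$, where no such symmetry is present. Here the row hypothesis shows that $\chi(g)\in\mathbb{R}$ for every $g$ whose class lies in $K_3$, because the two non-real entries of the row of $\chi$ already appear at $x^G$ and $(x^{-1})^G$; consequently $\sigma_2(g)=2\chi(1)\chi(g)$ on $K_3$, and using $\sigma_1+\sigma_2+\sigma_3=\rho$ together with $\rho(g)=0$ for $g\neq e$ one also gets $\sigma_3(g)=-1-2\chi(1)\chi(g)$. Thus the whole of condition 3 on $K_3$ collapses to the single assertion that $\chi$ is constant on $K_3$, i.e. that $\chi$ takes one common value on all classes outside $\{e,x^G,(x^{-1})^G\}$. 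Proving this is the crux, and it is precisely here that the full strength of ``at most two non-real entries in each row and column'' must be brought to bear, most naturally by coupling the rigid block structure this imposes on the non-real part of the character table with the column orthogonality relations among $x$, $x^{-1}$ and an arbitrary member of $K_3$. I expect this reduction to constancy on $K_3$ to be the hard step, the remaining conditions being either formal or immediate consequences of the inversion symmetry.
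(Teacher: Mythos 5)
You have not given a proof: your argument ends by reducing condition 3 on $K_3$ to the assertion that $\chi$ takes one common value on all classes outside $\{e, x^G, (x^{-1})^G\}$, and this ``crux'' is left open. Worse, it cannot be closed, because under the literal three-block reading of the partitions that you adopt, the lemma is false. Take $G = M_{11}$, let $\chi = \chi_6$ be one of the two irreducible characters of degree $16$, and let $x$ have order $11$. The hypotheses hold: the only non-real entries of the character table of $M_{11}$ are those of the degree-$10$ pair $\chi_3, \chi_4$ on the two classes of elements of order $8$, and those of $\chi_6, \chi_7 = \overline{\chi_6}$ on the two classes of elements of order $11$, so every row and column has at most two non-real entries. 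Yet $\chi_6$ takes the values $0, -2, 0, 1, 0, 0, 0$ on the classes $2A, 3A, 4A, 5A, 6A, 8A, 8B$, all of which lie in $K_3$; hence $\sigma_2 = \chi_6(1)(\chi_6 + \overline{\chi_6})$ is not constant on $K_3$, and by your own reduction $\sigma_3$ is not constant there either. No alternative choice of the characters $\chi_X$ can rescue this: by Diaconis--Isaacs, if any admissible choice were constant on the members of $\mathcal{K}$, the $\sigma_X$ would be as well. So the step you flag as the hard one is not merely hard; it is false.

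The source of the trouble is the reading of the statement, and the paper's own use of the lemma settles the intended meaning: in every application (e.g.\ for $M_{11}$, where $\mathcal{K}_1 = \{\{1\}, \{x_i^{M_{11}}\}\ (2 \leq i \leq 8), \{x_9^{M_{11}}, x_{10}^{M_{11}}\}\}$), the sets $Irr(G) - \{\chi_1, \chi, \overline{\chi}\}$ and $Con(G) - \{e, x^G, (x^{-1})^G\}$ are split into \emph{singletons}, so the only non-singleton blocks are $\{\chi, \overline{\chi}\}$ and $\{x^G, (x^{-1})^G\}$. Under that reading, condition 4 holds because $|Irr(G)| = |Con(G)|$, and condition 3 reduces to the single requirement $\sigma_X(x) = \sigma_X(x^{-1})$ for every block $X$, since any class function is automatically constant on a single conjugacy class. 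But this is exactly what you proved in your third paragraph: inversion symmetry gives it for $\{\chi, \overline{\chi}\}$, and the column hypothesis forces $\psi(x) \in \mathbb{R}$, hence $\psi(x^{-1}) = \overline{\psi(x)} = \psi(x)$, for every other $\psi$ (the row hypothesis is not needed at all here). This is also essentially the paper's entire proof, except that the paper records only the $\{\chi, \overline{\chi}\}$ computation and leaves the singleton check --- the one place where the two-non-real-entries hypothesis is genuinely used --- implicit; your treatment of that point is the more careful one. In short: your analysis of the pair superclass already constitutes a correct and complete proof of the lemma as intended and as used, while the final paragraph of your proposal chases a claim that is false and should be deleted together with the coarse reading that prompted it.
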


\begin{proof}
To prove, it is enough to investigate the main condition of
supercharacter theory for the conjugacy classes $x^{G}$, $
(x^{-1})^{G} $ and irreducible characters $\chi$,
$\overline{\chi}$. By definition of ${\sigma}_{X}$ on  $ X =
\{x^{G}, (x^{-1})^{G}\} $,
\begin{eqnarray*}
\sigma_{X}(x^{-1}) &=& \chi(1)\chi(x^{-1}) + \overline{\chi}(1)\overline{\chi}(x^{-1})\\
&=& \chi(1) \overline{\chi(x)} + \chi(1)\chi(x)\\
&=& \overline{\chi(1)}~\overline{\chi(x)} + \chi(1)\chi(x)\\
&=& \sigma_{X}(x).
\end{eqnarray*}
Therefore, {$ \sigma_{X} $} is constant on the part {$ \{x^{G},
(x^{-1})^{G}\} $} of $\mathcal{K}$, as desired.
\end{proof}

The following lemma is important for constructing supercharacter theories on simple groups.

\begin{lem}\label{12}
Suppose $G$ is  a finite group, $A = \{ \chi(x) \ | \ \chi \in Irr(G)\}$ and $\mathbb{Q}(A)$ denotes the filed generated by $\mathbb{Q}$ and $A$. Then the following holds:
\begin{enumerate}
\item If $\mathcal{X}(G) = \{ \{ \chi, \overline{\chi} \} \ | \ \chi \in  Irr(G) \}$ and $\mathcal{K}(G) = \{ \{ x^G, (x^{-1})^G\} \ | \ x \in G\}$
then $(\mathcal{X}, \mathcal{K})$ is a supercharacter theory of $G$.

\item If $\Gamma = Gal(\frac{\mathbb{Q(A)}}{\mathbb{Q}})$, $\mathcal{X}(G)$ is the set of all orbits of $\Gamma$ on $Irr(G)$ and $\mathcal{K}(G)$ is
the set of all orbits of $\Gamma$ on $Con(G)$ then  $(\mathcal{X},
\mathcal{K})$ is a supercharacter theory of $G$.

\end{enumerate}
\end{lem}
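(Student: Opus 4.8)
The plan is to handle both parts uniformly through the natural action of a cyclotomic Galois group on characters and on classes. Set $n = \exp(G)$, so that every value $\chi(g)$ is a sum of $\chi(1)$ roots of unity of order dividing $n$; hence all character values, and in particular $\mathbb{Q}(A)$, lie in the abelian extension $\mathbb{Q}(\zeta_n)$. Each $\gamma \in \Gamma = Gal(\mathbb{Q}(A)/\mathbb{Q})$ is realized by some $\sigma_t : \zeta_n \mapsto \zeta_n^{t}$ with $\gcd(t,n)=1$, and this acts on $Irr(G)$ by $\chi \mapsto \chi^{\gamma}$, where $\chi^{\gamma}(g) = \gamma(\chi(g))$, and on $Con(G)$ by the power map $x^{G} \mapsto (x^{t})^{G}$ (well defined, since characters separate classes). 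The identity that drives everything is the standard compatibility $\chi^{\gamma}(g) = \chi(g^{t})$, together with the fact that degrees are rational integers, so $\chi^{\gamma}(1) = \chi(1)$.

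Within this framework, conditions (1) and (2) are immediate, since the orbits of any group action partition the underlying set and the identity class is a singleton orbit because $e^{t}=e$. I would also record at the outset that part (1) is nothing but the special case $\Gamma = \langle \sigma_{-1}\rangle$: there the orbits on $Irr(G)$ are exactly the pairs $\{\chi,\overline{\chi}\}$ and the orbits on $Con(G)$ are the pairs $\{x^{G},(x^{-1})^{G}\}$. Thus it suffices to argue in the general setting of part (2), and part (1) follows by specialization.

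Next I would verify condition (3). For a $\Gamma$-orbit $X \subseteq Irr(G)$ put $\sigma_{X} = \sum_{\psi\in X}\psi(1)\psi$, and suppose $x^{G}$ and $(x^{t})^{G}$ lie in a common $\Gamma$-orbit of classes. Using $\psi(x^{t}) = \gamma(\psi(x))$ and $\psi(1)\in\mathbb{Z}$,
\begin{align*}
\sigma_{X}(x^{t}) = \sum_{\psi\in X}\psi(1)\psi(x^{t}) = \gamma\Big(\sum_{\psi\in X}\psi(1)\psi(x)\Big) = \sum_{\psi\in X}\psi(1)\,\psi^{\gamma}(x).
\end{align*}
Because $X$ is $\Gamma$-invariant, the map $\psi \mapsto \psi^{\gamma}$ permutes $X$ and preserves degrees, so reindexing the last sum returns $\sigma_{X}(x)$. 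Hence $\sigma_{X}(x^{t}) = \sigma_{X}(x)$, and $\sigma_{X}$ is constant on every superclass, exactly as required.

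Finally, condition (4) is the crux, and I expect it to be the main obstacle. Here I would invoke Brauer's permutation lemma: the compatibility $\chi^{\gamma}(x) = \chi(x^{t})$ says precisely that the two permutations $\gamma$ induces, one on $Irr(G)$ and one on $Con(G)$, are intertwined by the (invertible) character table, hence are conjugate permutation matrices and share the same number of fixed points. Therefore, for each $\gamma \in \Gamma$ the number of irreducible characters fixed by $\gamma$ equals the number of classes fixed by $\gamma$. Averaging these equal fixed-point counts over $\Gamma$ via Burnside's orbit-counting lemma yields equal numbers of $\Gamma$-orbits on $Irr(G)$ and on $Con(G)$, that is $|\mathcal{X}| = |\mathcal{K}|$. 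For part (1) this specializes to the classical equality between the number of real irreducible characters and the number of real conjugacy classes. Everything apart from this fixed-point comparison is formal.
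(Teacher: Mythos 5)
Your proof is correct. For comparison: the paper does not actually argue this lemma at all --- its entire ``proof'' is a citation to Diaconis--Isaacs (p.~2360) and to the paper of Shabani, Ashrafi and Ghorbani on rational character tables, which is where these two constructions (complex-conjugation orbits and Galois orbits) are stated. What you have written is, in substance, the argument those references rely on, worked out in full: you realize $\Gamma$ inside $Gal(\mathbb{Q}(\zeta_n)/\mathbb{Q})$ with $n=\exp(G)$, use the compatibility $\chi^{\gamma}(x)=\chi(x^{t})$ (correctly noting that the induced power map on classes does not depend on the chosen extension $\sigma_t$, because characters separate classes), verify constancy of $\sigma_X$ on superclasses by a reindexing argument that uses rationality of degrees, and get $|\mathcal{X}|=|\mathcal{K}|$ from Brauer's permutation lemma (equal fixed-point counts, since the two permutation matrices are intertwined by the invertible character table) followed by Burnside orbit counting. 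Your reduction of part (1) to the case $\Gamma=\langle\sigma_{-1}\rangle$ is also legitimate, since nothing in your argument uses that $\Gamma$ is the full Galois group of $\mathbb{Q}(A)/\mathbb{Q}$ --- only that it is a group of field automorphisms acting compatibly on $Irr(G)$ and $Con(G)$; this uniform treatment is slightly cleaner than handling the two parts separately. The trade-off is the obvious one: the paper buys brevity by outsourcing to the literature, while your version is self-contained and isolates the one genuinely non-formal ingredient, the fixed-point comparison coming from invertibility of the character table, exactly as you flagged.
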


\begin{proof}
The proof  follows from \cite[p. 2360]{7} and \cite{12}.
\end{proof}

To calculate the supercharacter of a finite group $G$, we first
sort the character table of $G$ by the following GAP commands \cite{19}:
\begin{alltt}
u:=CharacterTable(G);
t:=CharacterTableWithSortedCharacters(u);
\end{alltt}
Then we prepare a GAP program to check whether or not a given pair
$(\mathcal{X},\mathcal{K})$  of a partition $\mathcal{K}$ for
conjugacy classes and another partition $\mathcal{X}$ for
irreducible characters constitutes a supercharacter theory. To
find this pair of partitions, we usually apply Lemma \ref{11}.

\begin{thm}
The Mathieu groups $M_{11}, M_{12}, M_{22}, M_{23}$ and $M_{24}$ have at least five supercharacter theories.
\end{thm}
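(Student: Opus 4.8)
The plan is to exhibit, for each $G\in\{M_{11},M_{12},M_{22},M_{23},M_{24}\}$, five pairwise distinct supercharacter theories, so that $s(G)\ge 5$. Four candidates come from the general machinery: the two trivial theories $m(G)=(Irr(G),Con(G))$ and $M(G)$; the conjugate-pairing theory $c(G)$ furnished by Lemma \ref{12}(1), which merges each $\chi$ with $\overline{\chi}$ and each $x^{G}$ with $(x^{-1})^{G}$; and the Galois theory $g(G)$ furnished by Lemma \ref{12}(2), whose parts are the orbits of $\Gamma=Gal(\mathbb{Q}(A)/\mathbb{Q})$. First I would record that $m(G)$ and $M(G)$ are distinct because each Mathieu group has many more than three conjugacy classes, and that $c(G)$ differs from both: every $M_{n}$ carries a non-real irreducible character (a complex-conjugate pair occurs in its table), so $c(G)$ is strictly coarser than $m(G)$, yet it fuses only the non-real classes and therefore has far more than two parts, hence is strictly finer than $M(G)$. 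These three theories $m(G),M(G),c(G)$ are thus always distinct, and the whole problem reduces to supplying two further theories distinct from these and from each other.

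The decisive invariant is the number of non-real (non-ambivalent) conjugacy classes, which equals the number of non-real irreducible characters. For $M_{11}$ and $M_{22}$ there are two independent imaginary-quadratic class pairs, namely $\{8A,8B\}$ and $\{11A,11B\}$ in $M_{11}$ and $\{7A,7B\}$ and $\{11A,11B\}$ in $M_{22}$. Here $\mathbb{Q}(A)$ is a compositum of two imaginary quadratic fields, so $\Gamma$ is an elementary abelian $2$-group whose orbits on $Con(G)$ are exactly the conjugate pairs; consequently $g(G)=c(G)$ and the Galois theory gives nothing new. Instead I would apply Lemma \ref{11} twice: choosing a non-real $\chi$ and a class $x$ from the first pair yields a three-part theory $\ell_{1}$, and repeating with the second, independent pair yields $\ell_{2}$. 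The two differ because their non-trivial class-parts $\{x^{G},(x^{-1})^{G}\}$ differ, and both differ from $m(G),M(G),c(G)$ by a count of parts, giving the five distinct theories $m(G),M(G),c(G),\ell_{1},\ell_{2}$. One must of course check the hypothesis of Lemma \ref{11}, that each row and column of the sorted character table has at most two non-real entries; since the irrationalities are quadratic surds localized to the relevant pairs, this holds here, but it is exactly what GAP must confirm.

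The exceptional case is $M_{12}$, whose only complex characters are the two of degree $16$, so it has the single non-real class pair $\{11A,11B\}$ and the two-Lemma-\ref{11} route is unavailable. Here I would take $m(G),M(G),c(G)$, one three-part theory $\ell$ from Lemma \ref{11} on $\{11A,11B\}$, and the Galois theory $g(G)$; to make $g(G)$ genuinely new I must show $\Gamma$ strictly contains the subgroup generated by complex conjugation, which follows from a real irrationality in the table (such as a value $\pm\sqrt{2}$ on the order-$8$ classes) forcing a Galois automorphism that fuses two real classes kept separate by $c(G)$. Then $g(G)$ is coarser than $c(G)$ but finer than $M(G)$, and $m(G),M(G),c(G),g(G),\ell$ are pairwise distinct. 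For $M_{23}$ and $M_{24}$ the situation is the most comfortable: their tables contain several non-real class pairs together with real irrationalities (from elements of order $15$ and the like), so $\Gamma$ is strictly larger than $\langle$complex conjugation$\rangle$, whence $g(G)\neq c(G)$, and in addition Lemma \ref{11} supplies three-part theories; assembling five distinct theories is then routine.

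The main obstacle is distinctness, not existence. The four automatic theories can coincide, as the collapse $g(G)=c(G)$ for $M_{11}$ and $M_{22}$ shows, and the two-non-real-entries hypothesis of Lemma \ref{11} is delicate for the denser tables of $M_{23}$ and $M_{24}$, where a single class column (for instance $23A$) could in principle carry more than two non-real values. I would therefore settle each group by a finite computation on its sorted character table, enumerating the non-real classes and the action of $\Gamma$ with the GAP commands described above, so as to certify five inequivalent supercharacter theories in every case.
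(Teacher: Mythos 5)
Your proposal breaks down at $M_{12}$, and the failure is factual rather than cosmetic. Your fifth theory for $M_{12}$ is the Galois theory $g(M_{12})$, and you justify $g(M_{12})\neq c(M_{12})$ by asserting a real irrationality ``such as $\pm\sqrt{2}$ on the order-$8$ classes.'' No such value exists: every entry of the character table of $M_{12}$ is a rational integer except for the values $(-1\pm\sqrt{-11})/2$ of the two degree-$16$ characters on the classes $11A$, $11B$. Hence $\mathbb{Q}(A)=\mathbb{Q}(\sqrt{-11})$, the group $\Gamma$ is generated by complex conjugation, and $g(M_{12})=c(M_{12})$. Worse, since $M_{12}$ has only this one non-real pair, your Lemma~\ref{11} theory $\ell$ also merges exactly $\{\chi,\overline{\chi}\}$ (degree $16$) and $\{11A,11B\}$, so $\ell=c(M_{12})=g(M_{12})$ and your list $m,M,c,g,\ell$ contains only \emph{three} distinct supercharacter theories of $M_{12}$. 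The missing idea, which is how the paper handles this group, is a theory that comes neither from conjugation nor from Galois action: the fusion induced by the outer automorphism of $M_{12}$, which merges the class pairs $4A/4B$ and $8A/8B$ and the character pairs of degree $11$ and degree $55$ (the paper's $\mathcal{C}_2$ for $M_{12}$; its citation of Lemma~\ref{11} there is loose, since those characters are real, but the partition pair is valid as an automorphism-orbit theory). Together with $\mathcal{C}_1=\ell$ and the join $\mathcal{C}_1\vee\mathcal{C}_2$ this gives the required five.

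Two further corrections. First, the same factual issue infects your $M_{23}$ and $M_{24}$ cases: all irrational values in those tables are imaginary quadratic (they lie in $\mathbb{Q}(\sqrt{-7})$, $\mathbb{Q}(\sqrt{-11})$, $\mathbb{Q}(\sqrt{-15})$, $\mathbb{Q}(\sqrt{-23})$, or $\mathbb{Q}(\sqrt{-3})$); in particular the values on $15A/15B$ are not real, and the order-$21$ classes of $M_{24}$ form a complex-conjugate pair (such an element has cycle type $(21)(3)$ on $24$ points, and every permutation inverting it is odd, while $M_{24}\leq A_{24}$). So $g=c$ for these groups as well; note also that your criterion ``$\Gamma$ strictly larger than $\langle$complex conjugation$\rangle$'' would not suffice anyway, since what matters is whether the $\Gamma$-orbits differ from the conjugate pairs, not the order of $\Gamma$. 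Your declared fallback — two applications of Lemma~\ref{11} to suitably chosen pairs, certified by GAP — is sound and is exactly what the paper does, but the choice of pairs is forced: in $M_{23}$ the columns $7A$ and $14A$ carry four non-real entries (both the degree-$45$ and degree-$990$ pairs), so only pairs such as $11A/11B$, $15A/15B$, $23A/23B$ are usable. Second, Lemma~\ref{11} read literally as a three-part partition does not produce a supercharacter theory: in $M_{11}$, with $\chi$ of degree $16$, the supercharacter $16(\chi+\overline{\chi})$ takes the values $0$, $-64$, $32$ on the classes $2A$, $3A$, $5A$, so it is not constant on the complement part. The valid construction (and the one the paper actually writes down) keeps all remaining classes and characters as singletons and merges only the one conjugate pair on each side; your part-count distinctness argument survives this repair. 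With both repairs, your treatment of $M_{11}$, $M_{22}$, $M_{23}$, $M_{24}$ becomes essentially the paper's proof, since the paper's third theory $\mathcal{C}_1\vee\mathcal{C}_2$ is precisely your $c(G)$ — only $M_{12}$ genuinely requires the extra idea described above.
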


\begin{proof}
We will present five supercharacter theories for each Mathieu
group as follows:
\begin{itemize}
\item \textit{The Mathieu group $M_{11}$}. Suppose the irreducible characters and conjugacy classes of the Mathieu group $M_{11}$ are
$Irr(M_{11})$ = $\{  \chi_1, \chi_2, \ldots, \chi_{10}\}$ and
$Con(M_{11})$ = $\{  x_1^{M_{11}}, x_2^{M_{11}}, \ldots,
x_{10}^{M_{11}}\}$, respectively. We now define:
\begin{small}
\begin{eqnarray*}
\mathcal{K}_{1} &=& \left\lbrace \{1\}, \{x_{i}^{M_{11}}\} (2 \leq i \leq 8), \{x_{9}^{M_{11}},x_{10}^{M_{11}}\} \right\rbrace, \\
\mathcal{X}_{1} &=& \left\lbrace \{1\}, \{\chi_{i}\} (2 \leq i
\leq 5), \{\chi_{6}, \chi_{7}\},\{\chi_{i}\} (8 \leq i \leq
10)\right\rbrace, \\
\mathcal{K}_{2} &=& \left\lbrace \{1\}, \{x_{i}^{M_{11}}\} (2 \leq i \leq 6), \{x_{7}^{M_{11}},x_{8}^{M_{11}}\}, \{x_{i}^{M_{11}}\} (9 \leq i \leq 10) \right\rbrace, \\
\mathcal{X}_{2} &=& \left\lbrace \{1\}, \{\chi_{2}\}, \{\chi_{3},
\chi_{4}\},\{\chi_{i}\} (5 \leq i \leq 10)\right\rbrace.
\end{eqnarray*}
\end{small}
 By Lemma \ref{11} and \cite[Proposition 2.16]{10},
$\mathcal{C}_{1} = (\mathcal{X}_{1}, \mathcal{K}_{1})$,
$\mathcal{C}_{2} = (\mathcal{X}_{2},\mathcal{K}_{2})$ and
$\mathcal{C}_{3} = \mathcal{C}_{1} \vee \mathcal{C}_{2}$ are
supercharacter theories of $M_{11}$. Since $\mathcal{C}_{1}$,
$\mathcal{C}_{2}$, $\mathcal{C}_{3}$, $m(M_{11})$ and $M(M_{11})$
are distinct, $s(M_{11}) \geq 5$.

\item \textit{The Mathieu group $M_{12}$}. We assume that the irreducible characters and conjugacy classes of the Mathieu group $M_{12}$
are $Irr(M_{12})$ = $\{  \chi_1, \chi_2, \ldots, \chi_{15}\}$ and
$Con(M_{12})$ = $\{  x_1^{M_{12}},  \ldots, x_{15}^{M_{12}} \}$,
respectively. Define:
\begin{footnotesize}
\begin{eqnarray*}
\mathcal{K}_{1} &=& \left\lbrace \{1\}, \{x_{i}^{M_{12}}\} (2 \leq i \leq 13), \{x_{14}^{M_{12}},x_{15}^{M_{12}}\} \right\rbrace, \\
\mathcal{X}_{1} &=& \left\lbrace \{1\}, \{\chi_{i}\} (2 \leq i
\leq 3), \{\chi_{4}, \chi_{5}\},\{\chi_{i}\} (6 \leq i \leq
15)\right\rbrace, \\
\mathcal{K}_{2} &=& \left\lbrace \{1\},
\{x_{i}^{M_{12}}\} (2 \leq i \leq 5),
\{x_{6}^{M_{12}},x_{7}^{M_{12}}\}, \{x_{i}^{M_{12}}\} (8 \leq i
\leq 10),
\{x_{11}^{M_{12}},x_{12}^{M_{12}}\}, \{x_{i}^{M_{12}}\} (13 \leq i \leq 15) \right\rbrace, \\
\mathcal{X}_{2} &=& \left\lbrace \{1\}, \{\chi_{2}, \chi_{3}\},
\{\chi_{i}\} (4 \leq i \leq 8), \{\chi_{9},
\chi_{10}\},\{\chi_{i}\} (11 \leq i \leq 15)\right\rbrace .
\end{eqnarray*}
\end{footnotesize}
Then by Lemma \ref{11} and \cite[Proposition 2.16]{10}, the pairs
$\mathcal{C}_{1} = (\mathcal{X}_{1}, \mathcal{K}_{1})$,
$\mathcal{C}_{2} = (\mathcal{X}_{2},\mathcal{K}_{2})$ and
$\mathcal{C}_{3} = \mathcal{C}_{1} \vee \mathcal{C}_{2}$ are
three supercharacter theories of $M_{12}$ different from
$m(M_{12})$ and $M(M_{12})$. This proves that $s(M_{12}) \geq 5$,
as desired.

\item \textit{The Mathieu group $M_{22}$}. Suppose $Irr(M_{22})$ = $\{  \chi_1,  \ldots,
\chi_{12}\}$ and $Con(M_{22})$ = $\{ x_1^{M_{22}},  \ldots,
x_{12}^{M_{22}}\}$. We define:
\begin{small}
\begin{eqnarray*}
\mathcal{K}_{1} &=& \left\lbrace \{1\}, \{x_{i}^{M_{22}}\} (2 \leq i \leq 10), \{x_{11}^{M_{22}},x_{12}^{M_{22}}\} \right\rbrace, \\
\mathcal{X}_{2} &=& \left\lbrace \{1\}, \{\chi_{i}\} (2 \leq i
\leq 9), \{\chi_{10}, \chi_{11}\},\{\chi_{12}\} \right\rbrace, \\
\mathcal{K}_{2} &=& \left\lbrace \{1\}, \{x_{i}^{M_{22}}\} (2 \leq i \leq 7), \{x_{8}^{M_{22}},x_{9}^{M_{22}}\}, \{x_{i}^{M_{22}}\} (10 \leq i \leq 12) \right\rbrace, \\
\mathcal{X}_{2} &=& \left\lbrace \{1\}, \{\chi_{2}\}, \{\chi_{3},
\chi_{4}\},\{\chi_{i}\} (5 \leq i \leq 12)\right\rbrace.
\end{eqnarray*}
\end{small}
Then by Lemma \ref{11}  the pairs $\mathcal{C}_{1} =
(\mathcal{X}_{1}, \mathcal{K}_{1})$ and $\mathcal{C}_{2} =
(\mathcal{X}_{2},\mathcal{K}_{2})$ are supercharacter theories of
$M_{22}$. We now apply \cite[Proposition 2.16]{10} to prove that
$\mathcal{C}_{3} = \mathcal{C}_{1} \vee \mathcal{C}_{2}$ is
another supercharacter theory for $M_{22}$ which shows that
$s(M_{22}) \geq 5$.

\item \textit{The Mathieu group $M_{23}$}. Suppose $Irr(M_{23})$ = $\{  \chi_1,  \ldots, \chi_{17}\}$ and
$Con(M_{23})$ = $\{  x_1^{M_{23}},  \ldots, x_{17}^{M_{23}}\}$
are the irreducible characters and  conjugacy classes of the
Mathieu group $M_{23}$, respectively. If we can present three
supercharacter theories for $M_{23}$ different from $m(M_{23})$
and $M(M_{23})$ then it can be easily proved that $s(M_{23}) \geq
5$, as desired. Define:
\begin{small}
\begin{eqnarray*}
\mathcal{K}_{1} &=& \left\lbrace \{1\}, \{x_{i}^{M_{23}}\} (2 \leq i \leq 15), \{x_{16}^{M_{23}},x_{17}^{M_{23}}\} \right\rbrace, \\
\mathcal{X}_{1} &=& \left\lbrace \{1\}, \{\chi_{i}\} (2 \leq i
\leq 9), \{\chi_{10}, \chi_{11}\}, \{\chi_{i}\} (12 \leq i \leq
17) \right\rbrace, \\
\mathcal{K}_{2} &=& \left\lbrace \{1\}, \{x_{i}^{M_{23}}\} (2 \leq i \leq 9), \{x_{10}^{M_{23}},x_{11}^{M_{23}}\}, \{x_{i}^{M_{23}}\} (12 \leq i \leq 17) \right\rbrace, \\
\mathcal{X}_{2} &=& \left\lbrace \{1\}, \{\chi_{i}\} (2 \leq i
\leq 11), \{\chi_{12}, \chi_{13}\}, \{\chi_{i}\} (14 \leq i \leq
17) \right\rbrace.
\end{eqnarray*}
\end{small}
To complete the proof, it is enough to apply  Lemma \ref{11} and
\cite[Proposition 2.16]{10} for proving that $\mathcal{C}_{1} =
(\mathcal{X}_{1}, \mathcal{K}_{1})$, $\mathcal{C}_{2} =
(\mathcal{X}_{2},\mathcal{K}_{2})$ and $\mathcal{C}_{3} =
\mathcal{C}_{1} \vee \mathcal{C}_{2}$ are supercharacter theories
of $M_{23}$.

\item \textit{The Mathieu group $M_{24}$}. We now assume that $Irr(M_{24})$ = $\{  \chi_1, \chi_2, \ldots, \chi_{26}\}$ and
$Con(M_{24})$ = $\{  x_1^{M_{24}}, x_2^{M_{24}}, \ldots,
x_{26}^{M_{24}}\}$. If we define:
\begin{small}
\begin{eqnarray*}
\mathcal{K}_{1} &=& \left\lbrace \{1\}, \{x_{i}^{M_{24}}\} (2 \leq i \leq 20), \{x_{21}^{M_{24}},x_{22}^{M_{24}}\}, \{x_{i}^{M_{24}}\} (23 \leq i \leq 26) \right\rbrace, \\
\mathcal{X}_{1} &=& \left\lbrace \{1\}, \{\chi_{i}\} (2 \leq i
\leq 4), \{\chi_{5}, \chi_{6}\}, \{\chi_{i}\} (7 \leq i \leq 26)
\right\rbrace, \\
\mathcal{K}_{2} &=& \left\lbrace \{1\}, \{x_{i}^{M_{24}}\} (2 \leq i \leq 24), \{x_{25}^{M_{24}},x_{26}^{M_{24}}\} \right\rbrace, \\
\mathcal{X}_{2} &=& \left\lbrace \{1\}, \{\chi_{i}\} (2 \leq i
\leq 9), \{\chi_{10}, \chi_{11}\}, \{\chi_{i}\} (12 \leq i \leq
26) \right\rbrace.
\end{eqnarray*}
\end{small}
then by a similar calculation as other cases, we can prove that
 $\mathcal{C}_{1} = (\mathcal{X}_{1},
\mathcal{K}_{1})$, $\mathcal{C}_{2} =
(\mathcal{X}_{2},\mathcal{K}_{2})$ and  $\mathcal{C}_{3} =
\mathcal{C}_{1} \vee \mathcal{C}_{2}$ are three supercharacter
theories different from $m(M_{24})$ and $M(M_{24})$, proving this
case.
\end{itemize}
This completes the proof.
\end{proof}

\begin{thm}
The Leech lattice groups have at least five supercharacter theories.
\end{thm}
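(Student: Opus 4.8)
The plan is to reproduce, group by group, the strategy used above for the Mathieu family, applied now to the Conway groups $Co_1$, $Co_2$, $Co_3$ attached to the automorphism group of the Leech lattice. First I would load each character table from the GAP character table library \cite{19} and sort it with the commands recorded above, so that complex conjugate pairs of irreducible characters and the associated inverse pairs of conjugacy classes occupy adjacent indices. In each table I would then search for two \emph{distinct} complex conjugate pairs $\{\chi_i,\overline{\chi_i}\}$ and $\{\chi_j,\overline{\chi_j}\}$ of irreducible characters, together with inverse class pairs $\{x^{G},(x^{-1})^{G}\}$ and $\{y^{G},(y^{-1})^{G}\}$ at which the chosen characters are non-real.

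Next, applying Lemma \ref{11} to each of these two choices separately yields two supercharacter theories $\mathcal{C}_1=(\mathcal{X}_1,\mathcal{K}_1)$ and $\mathcal{C}_2=(\mathcal{X}_2,\mathcal{K}_2)$, in which exactly one character pair and one class pair are merged while all remaining characters and classes stay as singletons. By \cite[Proposition 2.16]{10} the join $\mathcal{C}_3=\mathcal{C}_1\vee\mathcal{C}_2$ is again a supercharacter theory, and since the two chosen pairs are disjoint its character partition simply merges both pairs, so $|\mathcal{X}_3|=|Irr(G)|-2$. Consequently $\mathcal{C}_3$ is strictly coarser than each of $\mathcal{C}_1$ and $\mathcal{C}_2$, and none of $\mathcal{C}_1,\mathcal{C}_2,\mathcal{C}_3$ coincides with the trivial theories $m(G)$ or $M(G)$; hence these five supercharacter theories are pairwise distinct and $s(G)\geq 5$ for each of the three groups.

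The main obstacle is the verification of the hypothesis of Lemma \ref{11}, namely that the chosen class column contains at most two non-real entries (those of $\chi$ and $\overline{\chi}$), which is exactly what forces every singleton supercharacter to be constant on the merged superclass $\{x^{G},(x^{-1})^{G}\}$. Unlike the small Mathieu tables, the Conway groups carry many algebraic irrationalities, and a given class column may be non-real for several character pairs at once; so the pairs must be selected with care, and for each candidate I would confirm the ``at most two non-real entries'' condition directly in GAP before invoking the lemma. Should a single table fail to provide two independent pairs meeting this condition, the fallback is Lemma \ref{12}: part (1) supplies the complex conjugation supercharacter theory and part (2) the full Galois theory unconditionally, so that whenever these are distinct from one another and from $m(G)$, $M(G)$ and any theory produced by Lemma \ref{11}, a combination of Lemmas \ref{11} and \ref{12} still yields the five required distinct supercharacter theories.
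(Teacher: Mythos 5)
There is a genuine gap, and it is one of scope rather than of method. In this paper the ``Leech lattice groups'' are not just the Conway groups: the proof of this theorem opens by listing seven groups, $HS$, $J_2$, $Co_1$, $Co_2$, $Co_3$, $McL$ and $Suz$, and it then treats $HS$, $McL$ and $Suz$ by exactly the same construction you describe for $Co_1$, $Co_2$, $Co_3$ (two applications of Lemma \ref{11} to disjoint non-real character/class pairs, the join $\mathcal{C}_3=\mathcal{C}_1\vee\mathcal{C}_2$ via \cite[Proposition 2.16]{10}, and distinctness from $m(G)$ and $M(G)$). For the three Conway groups your argument coincides with the paper's and is sound, including the observation that $\mathcal{C}_3$ merges both pairs and so is distinct from the other four theories. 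But as written your proof establishes nothing for $HS$, $McL$ and $Suz$, so it does not prove the theorem as stated; since those three groups also possess non-real irreducible character values, you would need to (and easily could) add the identical argument for them.

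A second point, which is a defect of the paper itself but which your fallback strategy also cannot repair: $J_2$ is one of the seven Leech lattice groups, yet all irreducible characters of $J_2$ are real valued (its irrationalities involve $\sqrt{5}$ but are real), so Lemma \ref{11} is inapplicable to it, and Lemma \ref{12} yields only the Galois-orbit theory beyond $m(J_2)$ and $M(J_2)$ --- three supercharacter theories in total, not five. The paper silently drops $J_2$ from its case analysis, proves only $s(J_2)\geq 3$ in a separate lemma, and indeed conjectures $s(J_2)=3$, so the theorem is not actually established (and is likely false) for $J_2$. Your proposal inherits this problem without noticing it: the closing claim that a ``combination of Lemmas \ref{11} and \ref{12} still yields the five required distinct supercharacter theories'' fails precisely in this case, since for a group with a real character table neither lemma can produce more than one non-trivial theory.
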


\begin{proof}
There are seven Leech lattice simple groups. These are $HS$,
$J_2$, $Co_1, Co_2, Co_3$, $McL$ and $Suz$. Our main proof will
consider seven cases as follows:
\begin{itemize}
\item \textit{The Higman-Sims group HS}. To establish five supercharacter theories for $HS$, we assume that
$Irr(HS) = \{  \chi_i\}_{1 \leq i \leq 24}$ and $Con(HS) = \{
x_i^{HS}\}_{1 \leq i \leq 24}$. Define:
\begin{small}
\begin{eqnarray*}
\mathcal{K}_{1} &=& \left\lbrace \{1\}, \{x_{i}^{HS}\} (2 \leq i \leq 22), \{x_{23}^{HS},x_{24}^{HS}\} \right\rbrace, \\
\mathcal{X}_{1} &=& \left\lbrace \{1\}, \{\chi_{i}\} (2 \leq i \leq 10), \{\chi_{11}, \chi_{12}\}, \{\chi_{i}\} (13 \leq i \leq 24) \right\rbrace, \\
\mathcal{K}_{2} &=& \left\lbrace \{1\}, \{x_{i}^{HS}\} (2 \leq i \leq 18), \{x_{19}^{HS},x_{20}^{HS}\}, \{x_{i}^{HS}\} (21 \leq i \leq 24) \right\rbrace, \\
\mathcal{X}_{2} &=& \left\lbrace \{1\}, \{\chi_{i}\} (2 \leq i
\leq 13), \{\chi_{14}, \chi_{15}\}, \{\chi_{i}\} (16 \leq i \leq
24) \right\rbrace.
\end{eqnarray*}
\end{small}
Then by Lemma \ref{11}, one can see that $\mathcal{C}_{1} =
(\mathcal{X}_{1}, \mathcal{K}_{1})$ and $\mathcal{C}_{2} =
(\mathcal{X}_{2},\mathcal{K}_{2})$ are supercharacter theories of
$HS$. We now apply \cite[Proposition 2.16]{10}, to prove that
$\mathcal{C}_{3} = \mathcal{C}_{1} \vee \mathcal{C}_{2}$ is
another supercharacter theory of $HS$. These supercharacter
theories are different from $m(HS)$ and $M(HS)$ which concludes
that $s(HS) \geq 5$.

\item \textit{The  Conway group $Co_{1}$}. Suppose $Irr(Co_{1}) = \{  \chi_i\}_{1 \leq i \leq 101}$ and $Con(Co_{1}) = \{  x_i^{Co_{1}}\}_{1 \leq i \leq 101}$.
Define:
\begin{small}
\begin{eqnarray*}
\mathcal{K}_{1} &=& \left\lbrace \{1\}, \{x_{i}^{Co_{1}}\} (2 \leq i \leq 96), \{x_{97}^{Co_{1}},x_{98}^{Co_{1}}\}, \{x_{i}^{Co_{1}}\} (99 \leq i \leq 101) \right\rbrace, \\
\mathcal{X}_{1} &=& \left\lbrace \{1\}, \{\chi_{i}\} (2 \leq i
\leq 26), \{\chi_{27}, \chi_{28}\}, \{\chi_{i}\} (29 \leq i \leq
101) \right\rbrace, \\
\mathcal{K}_{2} &=& \left\lbrace \{1\}, \{x_{i}^{Co_{1}}\} (2 \leq i \leq 77), \{x_{78}^{Co_{1}},x_{79}^{Co_{1}}\}, \{x_{i}^{Co_{1}}\} (80 \leq i \leq 101) \right\rbrace, \\
\mathcal{X}_{2} &=& \left\lbrace \{1\}, \{\chi_{i}\} (2 \leq i
\leq 16), \{\chi_{17}, \chi_{18}\}, \{\chi_{i}\} (19 \leq i \leq
101) \right\rbrace.
\end{eqnarray*}
\end{small}
By Lemma \ref{11}, $\mathcal{C}_{1} = (\mathcal{X}_{1},
\mathcal{K}_{1})$ and $\mathcal{C}_{2} =
(\mathcal{X}_{2},\mathcal{K}_{2})$ are supercharacter theories of
$Co_1$ and by \cite[Proposition 2.16]{10}, $\mathcal{C}_{3} =
\mathcal{C}_{1} \vee \mathcal{C}_{2}$. Since these supercharacter
theories are different from  $m(Co_{1})$ and $M(Co_{1})$,
$s(Co_1) \geq 5$. Hence the result  follows.

\item \textit{The second Conway group $Co_{2}$}. Let
$Irr(Co_{2}) = \{  \chi_i\}_{1 \leq i \leq 60}$ and $Con(Co_{2})
= \{  x_i^{Co_{2}}\}_{1 \leq i \leq 60}$. Define:
{\small\begin{eqnarray*} \mathcal{K}_{1} &=& \left\lbrace \{1\},
\{x_{i}^{Co_{2}}\} (2 \leq i \leq 45),
\{x_{46}^{Co_{2}},x_{47}^{Co_{2}}\}, \{x_{i}^{Co_{2}}\} (48 \leq
i \leq 58),
\{x_{59}^{Co_{2}},x_{60}^{Co_{2}}\}\right\rbrace, \\
\mathcal{X}_{1} &=& \left\lbrace \{1\}, \{\chi_{i}\} (2 \leq i
\leq 11), \{\chi_{12}, \chi_{13}\}, \{\chi_{i}\} (14 \leq i \leq
30), \{\chi_{31}, \chi_{32}\}, \{\chi_{i}\} (33 \leq i \leq 60)
\right\rbrace, \\
\mathcal{K}_{2} &=& \left\lbrace \{1\}, \{x_{i}^{Co_{2}}\} (2 \leq i \leq 52), \{x_{53}^{Co_{2}},x_{54}^{Co_{2}}\}, \{x_{i}^{Co_{2}}\} (55 \leq i \leq 60)\right\rbrace, \\
\mathcal{X}_{2} &=& \left\lbrace \{1\}, \{\chi_{i}\} (2 \leq i
\leq 9), \{\chi_{10}, \chi_{11}\}, \{\chi_{i}\} (12 \leq i \leq
60) \right\rbrace.
\end{eqnarray*}}
By Lemma \ref{11} and  \cite[Proposition 2.16]{10},
$\mathcal{C}_{1} = (\mathcal{X}_{1}, \mathcal{K}_{1})$,
$\mathcal{C}_{2} = (\mathcal{X}_{2},\mathcal{K}_{2})$ and
$\mathcal{C}_{3} = \mathcal{C}_{1} \vee \mathcal{C}_{2}$ are
supercharacter theories of the Conway group $Co_2$. Since these
supercharacter theories are different from $m(Co_{2})$ and
$M(Co_{2})$, $s(Co_2) \geq 5$.

\item \textit{The third Conway group $Co_{3}$}. We assume that
$Irr(Co_{3})$ = $\{  \chi_i\}_{1 \leq i \leq 42}$ and
$Con(Co_{3})$ = $\{  x_i^{Co_{3}}\}_{1 \leq i \leq 42}$. Define:
\begin{footnotesize}
\begin{eqnarray*} \mathcal{K}_{1} &=& \left\lbrace \{1\},
\{x_{i}^{Co_{3}}\} (2 \leq i \leq 23),
\{x_{24}^{Co_{3}},x_{25}^{Co_{3}}\}, \{x_{i}^{Co_{3}}\} (26 \leq
i \leq 35),
\{x_{36}^{Co_{3}},x_{37}^{Co_{3}}\}, \{x_{i}^{Co_{3}}\} (38 \leq i \leq 42)\right\rbrace, \\
\mathcal{X}_{1} &=& \left\lbrace \{1\}, \{\chi_{i}\} (2 \leq i
\leq 5), \{\chi_{6}, \chi_{7}\}, \{\chi_{i}\} (8 \leq i \leq 17),
\{\chi_{18}, \chi_{19}\}, \{\chi_{i}\} (20 \leq i \leq 42)
\right\rbrace,\\
\mathcal{K}_{2} &=& \left\lbrace \{1\}, \{x_{i}^{Co_{3}}\} (2 \leq i \leq 37), \{x_{38}^{Co_{3}},x_{39}^{Co_{3}}\}, \{x_{i}^{Co_{3}}\} (40 \leq i \leq 42)\right\rbrace, \\
\mathcal{X}_{2} &=& \left\lbrace \{1\}, \{\chi_{i}\} (2 \leq i
\leq 15), \{\chi_{16}, \chi_{17}\}, \{\chi_{i}\} (18 \leq i \leq
42) \right\rbrace.
\end{eqnarray*}
\end{footnotesize}
Then $m(Co_{3})$, $M(Co_{3})$, $\mathcal{C}_{1} =
(\mathcal{X}_{1}, \mathcal{K}_{1})$, $\mathcal{C}_{2} =
(\mathcal{X}_{2},\mathcal{K}_{2})$ and $\mathcal{C}_{3} =
\mathcal{C}_{1} \vee \mathcal{C}_{2}$ are five supercharacter
theories for $Co_3$. Thus $s(Co_3) \geq 5$, as required.

\item \textit{The McLaughlin group $McL$}. Suppose $Irr(McL) = \{  \chi_i\}_{1 \leq i \leq 24}$ and
$Con(McL) = \{  x_i^{McL}\}_{1 \leq i \leq 24}$. We now define:
\begin{small}
\begin{eqnarray*}
\mathcal{K}_{1} &=& \left\lbrace \{1\}, \{x_{i}^{McL}\} (2 \leq i \leq 15), \{x_{16}^{McL},x_{17}^{McL}\}, \{x_{i}^{McL}\} (18 \leq i \leq 24) \right\rbrace, \\
\mathcal{X}_{1} &=& \left\lbrace \{1\}, \{\chi_{i}\} (2 \leq i
\leq 6), \{\chi_{7}, \chi_{8}\}, \{\chi_{i}\} (9 \leq i \leq 24)
\right\rbrace, \\
\mathcal{K}_{2} &=& \left\lbrace \{1\}, \{x_{i}^{McL}\} (2 \leq i \leq 12), \{x_{13}^{McL},x_{14}^{McL}\}, \{x_{i}^{McL}\} (15 \leq i \leq 24) \right\rbrace, \\
\mathcal{X}_{2} &=& \left\lbrace \{1\}, \{\chi_{i}\} (2 \leq i
\leq 20), \{\chi_{21}, \chi_{22}\}, \{\chi_{i}\} (23 \leq i \leq
24) \right\rbrace.
\end{eqnarray*}
\end{small}
Since $m(McL)$, $M(McL)$, $\mathcal{C}_{1} = (\mathcal{X}_{1},
\mathcal{K}_{1})$,  $\mathcal{C}_{2} =
(\mathcal{X}_{2},\mathcal{K}_{2})$ and $\mathcal{C}_{3} =
\mathcal{C}_{1} \vee \mathcal{C}_{2}$ are five supercharacter
theories of $McL$, $s(McL) \geq 5$.

\item {\textit{The Suzuki group $Suz$}}. The irreducible characters and conjugacy classes for the Suzuki  group $Suz$ are
$Irr(Suz) = \{  \chi_i\}_{1 \leq i \leq 43}$ and $Con(Suz) = \{
x_i^{Suz}\}_{1 \leq i \leq 43}$, respectively. Define:
\begin{small}
\begin{eqnarray*}
\mathcal{K}_{1} &=& \left\lbrace \{1\}, \{x_{i}^{Suz}\} (2 \leq i \leq 40), \{x_{41}^{Suz},x_{42}^{Suz}\}, \{x_{43}^{Suz}\} \right\rbrace, \\
\mathcal{X}_{1} &=& \left\lbrace \{1\}, \{\chi_{i}\} (2 \leq i
\leq 24), \{\chi_{25}, \chi_{26}\}, \{\chi_{i}\} (27 \leq i \leq
43) \right\rbrace, \\
\mathcal{K}_{2} &=& \left\lbrace \{1\}, \{x_{i}^{Suz}\} (2 \leq i \leq 34), \{x_{35}^{Suz},x_{36}^{Suz}\}, \{x_{i}^{Suz}\} (37 \leq i \leq 43) \right\rbrace, \\
\mathcal{X}_{2} &=& \left\lbrace \{1\}, \{\chi_{i}\} (2 \leq i
\leq 12), \{\chi_{13}, \chi_{14}\}, \{\chi_{i}\} (15 \leq i \leq
43) \right\rbrace.
\end{eqnarray*}
\end{small}
Since this group has five supercharacter theories $m(Suz)$,
$M(Suz)$, $\mathcal{C}_{1} = (\mathcal{X}_{1}, \mathcal{K}_{1})$,
$\mathcal{C}_{2} = (\mathcal{X}_{2},\mathcal{K}_{2})$ and
$\mathcal{C}_{3} = \mathcal{C}_{1} \vee \mathcal{C}_{2}$, we
conclude  that $s(Suz) \geq 5$.
\end{itemize}
This completes the proof.
\end{proof}

\begin{thm}
The Monster sections have at least five supercharacter theories.
\end{thm}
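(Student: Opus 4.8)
The plan is to treat each group separately, reproducing the construction already used for the Mathieu and Leech lattice groups. The sporadic simple groups that are sections of the Monster but are not covered by the previous two theorems are the three Fischer groups $Fi_{22}$, $Fi_{23}$, $Fi_{24}'$, the Held group $He$, the Harada--Norton group $HN$, the Thompson group $Th$, the Baby Monster $B$ and the Monster $M$ itself. For each such $G$ I would exhibit two nontrivial supercharacter theories $\mathcal{C}_1 = (\mathcal{X}_1, \mathcal{K}_1)$ and $\mathcal{C}_2 = (\mathcal{X}_2, \mathcal{K}_2)$ by means of Lemma \ref{11}, form their join $\mathcal{C}_3 = \mathcal{C}_1 \vee \mathcal{C}_2$, which is again a supercharacter theory by \cite[Proposition 2.16]{10}, and verify that $m(G)$, $M(G)$, $\mathcal{C}_1$, $\mathcal{C}_2$, $\mathcal{C}_3$ are pairwise distinct. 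This gives $s(G) \geq 5$ for each group on the list and hence proves the theorem.

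The data feeding Lemma \ref{11} is, for each $G$, a pair $(\chi, x)$ with $\chi$ a non-real irreducible character and $x$ an element satisfying $\chi(x) \notin \mathbb{R}$. Since $\psi(x^{-1}) = \overline{\psi(x)}$ for every character $\psi$, the requirement of Lemma \ref{11} comes down to choosing $x$ so that the only non-real entries in its column in the character table occur in the conjugate pair of rows $\chi, \overline{\chi}$; one then merges $\{\chi, \overline{\chi}\}$ in $\mathcal{X}_1$ and $\{x^G, (x^{-1})^G\}$ in $\mathcal{K}_1$, leaving every other character and class as a singleton, and a second, independent such pair produces $\mathcal{C}_2$. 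Classes of this type are plentiful in these groups --- they are exactly the classes $x$ with $x \not\sim x^{-1}$ that are separated from their inverse by a single irrational character pair, for instance the classes carrying a field irrationality $b_n$ in the ATLAS sense --- so admissible pairs exist for every group. Provided the conjugate pair and class pair merged in $\mathcal{C}_1$ differ from those merged in $\mathcal{C}_2$, the partitions $m(G)$, $M(G)$, $\mathcal{C}_1$, $\mathcal{C}_2$, $\mathcal{C}_3$ have, respectively, $|Irr(G)|$, $2$, $|Irr(G)|-1$, $|Irr(G)|-1$ and $|Irr(G)|-2$ blocks on the character side, and $\mathcal{C}_1 \neq \mathcal{C}_2$ as partitions, so the five theories are distinct.

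The only real obstacle is the size of the character tables. For $Fi_{24}'$, $B$ and $M$, whose tables have $108$, $184$ and $194$ rows, one cannot search for admissible pairs by hand; instead I would load the tables from the GAP character table library, apply the verification routine described after Lemma \ref{12} to the sorted table in order to locate two suitable pairs $(\chi, x)$ for each group, and then confirm distinctness of the five resulting theories. Once a table is in memory each individual check is mechanical, so the real work is merely to organize the eight cases uniformly and to record, for each, the two index pairs defining $\mathcal{C}_1$ and $\mathcal{C}_2$, exactly as in the two theorems already proved.
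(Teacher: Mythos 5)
Your proposal follows essentially the same route as the paper's own proof: the paper treats exactly the same eight groups ($He$, $HN$, $Th$, $Fi_{22}$, $Fi_{23}$, $Fi_{24}^{\prime}$, $B$, $M$) and, for each, builds $\mathcal{C}_1$ and $\mathcal{C}_2$ via Lemma \ref{11} by merging one conjugate pair of irrational characters with the corresponding pair of mutually inverse classes, forms $\mathcal{C}_3 = \mathcal{C}_1 \vee \mathcal{C}_2$ via \cite[Proposition 2.16]{10}, and concludes $s(G) \geq 5$ by distinctness from $m(G)$ and $M(G)$. The only difference is that the paper records the explicit (GAP-located) index pairs defining each $\mathcal{C}_1$ and $\mathcal{C}_2$, which is precisely the mechanical computation your plan defers to the verification routine.
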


\begin{proof}
The Monster sections are eight simple groups $He$, $HN$, $Th$,
$Fi_{22}$, $Fi_{23}$, $Fi_{24}^\prime$, $B$ and $M$. We will
present five supercharacter theories in each case  as follows:
\begin{itemize}
\item {\textit{The Held group $He$}}. Suppose $Irr(He) = \{  \chi_i\}_{1 \leq i \leq 33}$ and
$Con(He) = \{  x_i^{He}\}_{1 \leq i \leq 33}$ are irreducible
characters and  conjugacy classes of the group $He$,
respectively. Define:
\begin{small}
\begin{eqnarray*}
\mathcal{K}_{1} &=& \left\lbrace \{1\}, \{x_{i}^{He}\} (2 \leq i \leq 27), \{x_{28}^{He},x_{29}^{He}\}, \{x_{i}^{He}\} (30 \leq i \leq 33) \right\rbrace, \\
\mathcal{X}_{1} &=& \left\lbrace \{1\}, \{\chi_{i}\} (2 \leq i
\leq 29), \{\chi_{30}, \chi_{31}\}, \{\chi_{i}\} (32 \leq i \leq
33) \right\rbrace, \\
\mathcal{K}_{2} &=& \left\lbrace \{1\}, \{x_{i}^{He}\} (2 \leq i \leq 25), \{x_{26}^{He},x_{27}^{He}\}, \{x_{i}^{He}\} (28 \leq i \leq 33) \right\rbrace, \\
\mathcal{X}_{2} &=& \left\lbrace \{1\}, \{\chi_{i}\} (2 \leq i
\leq 6), \{\chi_{7}, \chi_{8}\}, \{\chi_{i}\} (9 \leq i \leq 33)
\right\rbrace.
\end{eqnarray*}
\end{small}
Since  $m(He)$, $M(He)$, $\mathcal{C}_{1} = (\mathcal{X}_{1},
\mathcal{K}_{1})$, $\mathcal{C}_{2} =
(\mathcal{X}_{2},\mathcal{K}_{2})$ and  $\mathcal{C}_{3} =
\mathcal{C}_{1} \vee \mathcal{C}_{2}$ are five supercharacter
theories for $He$, $s(He) \geq 5$.

\item {\textit{The Harada-Norton group $HN$}}. This group has exactly 54 conjugacy classes and irreducible characters. Suppose $Irr(HN) = \{  \chi_i\}_{1 \leq i \leq 54}$ and
$Con(HN) = \{  x_i^{HN}\}_{1 \leq i \leq 54}$. We also define:
\begin{small}
\begin{eqnarray*}
\mathcal{K}_{1} &=& \left\lbrace \{1\}, \{x_{i}^{HN}\} (2 \leq i \leq 38), \{x_{39}^{HN},x_{40}^{HN}\}, \{x_{i}^{HN}\} (41 \leq i \leq 54) \right\rbrace, \\
\mathcal{X}_{1} &=& \left\lbrace \{1\}, \{\chi_{i}\} (2 \leq i \leq 50), \{\chi_{51}, \chi_{52}\}, \{\chi_{i}\} (53 \leq i \leq 54) \right\rbrace, \\
\mathcal{K}_{2} &=& \left\lbrace \{1\}, \{x_{i}^{HN}\} (2 \leq i \leq 52), \{x_{53}^{HN},x_{54}^{HN}\}\right\rbrace, \\
\mathcal{X}_{2} &=& \left\lbrace \{1\}, \{\chi_{i}\} (2 \leq i
\leq 34), \{\chi_{35}, \chi_{36}\}, \{\chi_{i}\} (37 \leq i \leq
54) \right\rbrace.
\end{eqnarray*}
\end{small}
By Lemma \ref{11} and  \cite[Proposition 2.16]{10},
$\mathcal{C}_{1} = (\mathcal{X}_{1}, \mathcal{K}_{1})$,
$\mathcal{C}_{2} = (\mathcal{X}_{2},\mathcal{K}_{2})$ and
$\mathcal{C}_{3} = \mathcal{C}_{1} \vee \mathcal{C}_{2}$ are
supercharacter theories of $HN$ and apart from supercharacter
theories $m(HN)$ and $M(HN)$, it concludes that $m(HN) \geq 5$.

\item {\textit{The Thompson group $Th$}}. The Thompson group $Th$ has exactly 48 conjugacy classes and irreducible characters. We assume that
$Irr(Th) = \{  \chi_i\}_{1 \leq i \leq 48}$ and $Con(Th) = \{
x_i^{Th}\}_{1 \leq i \leq 48}$ and define:
{\small\begin{eqnarray*}
\mathcal{K}_{1} &=& \left\lbrace \{1\}, \{x_{i}^{Th}\} (2 \leq i \leq 24), \{x_{25}^{Th}, x_{26}^{Th}\}, \{x_{i}^{Th}\} (27 \leq i \leq 39), \{x_{40}^{Th}, x_{41}^{Th}\}, \{x_{i}^{Th}\} (42 \leq i \leq 48) \right\rbrace, \\
\mathcal{X}_{1} &=& \left\lbrace \{1\}, \{\chi_{i}\} (2 \leq i
\leq 8), \{\chi_{9}, \chi_{10}\}, \{\chi_{i}\} (11 \leq i \leq
34), \{\chi_{35}, \chi_{36}\}, \{\chi_{i}\} (37
\leq i \leq 48) \right\rbrace, \\
\mathcal{K}_{2} &=& \left\lbrace \{1\}, \{x_{i}^{Th}\} (2 \leq i \leq 36), \{x_{37}^{Th}, x_{38}^{Th}\}, \{x_{i}^{Th}\} (39 \leq i \leq 48) \right\rbrace ,\\
\mathcal{X}_{2} &=& \left\lbrace \{1\}, \{\chi_{i}\} (2 \leq i
\leq 21), \{\chi_{22}, \chi_{23}\}, \{\chi_{i}\} (24 \leq i \leq
48) \right\rbrace.
\end{eqnarray*}}
Again apply Lemma \ref{11} and  \cite[Proposition 2.16]{10} to
deduce that  $\mathcal{C}_{1} = (\mathcal{X}_{1},
\mathcal{K}_{1})$,  $\mathcal{C}_{2} =
(\mathcal{X}_{2},\mathcal{K}_{2})$ and finally $\mathcal{C}_{3} =
\mathcal{C}_{1} \vee \mathcal{C}_{2}$ are supercharacter theories
for $Th$. Thus $s(Th) \geq 5$.

\item {\textit{The Fischer group $Fi_{22}$}}. The Fischer group $Fi_{22}$
has exactly $65$ conjugacy classes and irreducible characters. Set
$Irr(Fi_{22}) = \{ \chi_i\}_{1 \leq i \leq 65}$ and $Con(Fi_{22})
= \{ x_i^{Fi_{22}}\}_{1 \leq i \leq 65}$. We also define:
\begin{scriptsize}
\begin{eqnarray*}
\mathcal{K}_{1} &=& \left\lbrace \{1\}, \{x_{i}^{Fi_{22}}\} (2
\leq i \leq 35), \{x_{36}^{Fi_{22}}, x_{37}^{Fi_{22}}\},
\{x_{i}^{Fi_{22}}\} (38 \leq i \leq 60),
\{x_{61}^{Fi_{22}}, x_{62}^{Fi_{22}}\}, \{x_{i}^{Fi_{22}}\} (63 \leq i \leq 65) \right\rbrace, \\
\mathcal{X}_{1} &=& \left\lbrace \{1\}, \{\chi_{i}\} (2 \leq i \leq 39), \{\chi_{40}, \chi_{41}\}, \{\chi_{i}\} (42 \leq i \leq 50), \{\chi_{51}, \chi_{52}\}, \{\chi_{i}\} (53 \leq i \leq 65) \right\rbrace, \\
\mathcal{K}_{2} &=& \left\lbrace \{1\}, \{x_{i}^{Fi_{22}}\} (2 \leq i \leq 54), \{x_{55}^{Fi_{22}}, x_{56}^{Fi_{22}}\}, \{x_{i}^{Fi_{22}}\} (57 \leq i \leq 65)\right\rbrace, \\
\mathcal{X}_{2} &=& \left\lbrace \{1\}, \{\chi_{i}\} (2 \leq i
\leq 42), \{\chi_{43}, \chi_{44}\}, \{\chi_{i}\} (45 \leq i \leq
65) \right\rbrace.
\end{eqnarray*}
\end{scriptsize}
Since $\mathcal{C}_{1} = (\mathcal{X}_{1}, \mathcal{K}_{1})$,
$\mathcal{C}_{2} = (\mathcal{X}_{2},\mathcal{K}_{2})$ and finally
$\mathcal{C}_{3} = \mathcal{C}_{1} \vee \mathcal{C}_{2}$, are
three supercharacter theories of $Fi_{22}$ different from
$m(Fi_{22})$ and $M(Fi_{22})$, $s(Fi_{22}) \geq 5$.

\item {\textit{The Fischer group $Fi_{23}$}}. This group has exactly $98$ conjugacy classes and irreducible characters.
Set $Irr(Fi_{23}) = \{  \chi_i\}_{1 \leq i \leq 98}$ and
$Con(Fi_{23}) = \{  x_i^{Fi_{23}}\}_{1 \leq i \leq 98}$. Define:
\begin{footnotesize}
\begin{eqnarray*}
\mathcal{K}_{1} &=& \left\lbrace \{1\}, \{x_{i}^{Fi_{23}}\} (2 \leq i \leq 79), \{x_{80}^{Fi_{23}}, x_{81}^{Fi_{23}}\}, \{x_{i}^{Fi_{23}}\} (82 \leq i \leq 98)\right\rbrace, \\
\mathcal{X}_{1} &=& \left\lbrace \{1\}, \{\chi_{i}\} (2 \leq i \leq 16), \{\chi_{17}, \chi_{18}\}, \{\chi_{i}\} (19 \leq i \leq 98) \right\rbrace, \\
\mathcal{K}_{2} &=& \left\lbrace \{1\}, \{x_{i}^{Fi_{23}}\} (2 \leq i \leq 62), \{x_{63}^{Fi_{23}}, x_{64}^{Fi_{23}}\}, \{x_{i}^{Fi_{23}}\} (65 \leq i \leq 79), \{x_{80}^{Fi_{23}}, x_{81}^{Fi_{23}}\}, \{x_{i}^{Fi_{23}}\} (82 \leq i \leq 98)\right\rbrace, \\
\mathcal{X}_{2} &=& \left\lbrace \{1\}, \{\chi_{i}\} (2 \leq i
\leq 14), \{\chi_{15}, \chi_{16}\}, \{\chi_{17}, \chi_{18}\},
\{\chi_{i}\} (19 \leq i \leq 98) \right\rbrace,\\
\mathcal{C}_{1} &=& (\mathcal{X}_{1}, \mathcal{K}_{1}),\ \
\mathcal{C}_{2} = (\mathcal{X}_{2},\mathcal{K}_{2}),\ \
\mathcal{C}_{3} = \mathcal{C}_{1} \vee \mathcal{C}_{2}.
\end{eqnarray*}
\end{footnotesize}
Since $\{ m(Fi_{23}), M(Fi_{23}), \mathcal{C}_{1},
\mathcal{C}_{2}, \mathcal{C}_{3}\} \subseteq Sup(Fi_{23})$,
$s(Fi_{23}) \geq 5$.

\item {\textit{The Fischer group $Fi_{24}^{\prime}$}}. The largest Fischer group has exactly $108$ conjugacy classes and irreducible characters. Set
$Irr(Fi_{24}^{\prime}) = \{  \chi_i\}_{1 \leq i \leq 108}$ and
$Con(Fi_{24}^{\prime}) = \{  x_i^{Fi_{24}^{\prime}}\}_{1 \leq i
\leq 108}$. Define:
\begin{small}
\begin{eqnarray*}
\mathcal{K}_{1} &=& \left\lbrace \{1\},
\{x_{i}^{Fi_{24}^{\prime}}\} (2 \leq i \leq 105),
\{x_{106}^{Fi_{24}^{\prime}}, x_{107}^{Fi_{24}^{\prime}}\},
\{x_{108}^{Fi_{24}^{\prime}}\}\right\rbrace, \\
\mathcal{X}_{1} &=& \left\lbrace \{1\}, \{\chi_{i}\} (2 \leq i \leq 98), \{\chi_{99}, \chi_{100}\}, \{\chi_{i}\} (101 \leq i \leq 108) \right\rbrace, \\
\mathcal{K}_{2} &=& \left\lbrace \{1\},
\{x_{i}^{Fi_{24}^{\prime}}\} (2 \leq i \leq 80),
\{x_{81}^{Fi_{24}^{\prime}}, x_{82}^{Fi_{24}^{\prime}}\},
\{x_{i}^{Fi_{24}^{\prime}}\} (83 \leq i \leq 108)\right\rbrace, \\
\mathcal{X}_{2} &=& \left\lbrace \{1\}, \{\chi_{i}\} (2 \leq i
\leq 100), \{\chi_{101}, \chi_{102}\}, \{\chi_{i}\} (103 \leq i
\leq 108) \right\rbrace.
\end{eqnarray*}
\end{small}
Since  $m(Fi_{24}^{\prime})$, $M(Fi_{24}^{\prime})$,
$\mathcal{C}_{1} = (\mathcal{X}_{1}, \mathcal{K}_{1})$,
$\mathcal{C}_{2} = (\mathcal{X}_{2},\mathcal{K}_{2})$ and
$\mathcal{C}_{3} = \mathcal{C}_{1} \vee \mathcal{C}_{2}$ are
supercharacter theories of $Fi_{24}$, $s(Fi_{24}) \geq 5$.

\item {\textit{The Baby Monster group $B$}}. This group has exactly $184$ conjugacy classes and irreducible characters.
Suppose $Irr(B) = \{  \chi_i\}_{1 \leq i \leq 184}$, $Con(B) = \{
x_i^{B}\}_{1 \leq i \leq 184}$ and define:
\begin{small}
\begin{eqnarray*}
\mathcal{K}_{1} &=& \left\lbrace \{1\}, \{x_{i}^{B}\} (2 \leq i \leq 177), \{x_{178}^{B}, x_{179}^{B}\}, \{x_{i}^{B}\} (180 \leq i \leq 184)\right\rbrace, \\
\mathcal{X}_{1} &=& \left\lbrace \{1\}, \{\chi_{i}\} (2 \leq i
\leq 177), \{\chi_{178}, \chi_{179}\}, \{\chi_{i}\} (180 \leq i
\leq 184) \right\rbrace,\\
\mathcal{K}_{2} &=& \left\lbrace \{1\}, \{x_{i}^{B}\} (2 \leq i \leq 171), \{x_{172}^{B}, x_{173}^{B}\}, \{x_{i}^{B}\} (174 \leq i \leq 184)\right\rbrace, \\
\mathcal{X}_{2} &=& \left\lbrace \{1\}, \{\chi_{i}\} (2 \leq i
\leq 20), \{\chi_{21}, \chi_{22}\}, \{\chi_{i}\} (23 \leq i \leq
184) \right\rbrace.
\end{eqnarray*}
\end{small}
Since $m(B)$, $M(B)$, $\mathcal{C}_{1} = (\mathcal{X}_{1},
\mathcal{K}_{1})$,  $\mathcal{C}_{2} =
(\mathcal{X}_{2},\mathcal{K}_{2})$ and  $\mathcal{C}_{3} =
\mathcal{C}_{1} \vee \mathcal{C}_{2}$ are supercharacter theories
of $B$, $s(B) \geq 5$, as desired.

\item {\textit{The Monster group $M$}}. The largest sporadic group
$M$ has exactly $198$ conjugacy classes and irreducible characters.
Set $Irr(M) = \{ \chi_i\}_{1 \leq i \leq 194}$ and $Con(M) = \{
x_i^{M}\}_{1 \leq i \leq 194}$.
\begin{small}
\begin{eqnarray*}
\mathcal{K}_{1} &=& \left\lbrace \{1\}, \{x_{i}^{M}\} (2 \leq i \leq 192), \{x_{193}^{M}, x_{194}^{M}\}\right\rbrace, \\
\mathcal{X}_{1} &=& \left\lbrace \{1\}, \{\chi_{i}\} (2 \leq i
\leq 46), \{\chi_{47}, \chi_{48}\}, \{\chi_{i}\} (49 \leq i \leq
194) \right\rbrace,\\
\mathcal{K}_{2} &=& \left\lbrace \{1\}, \{x_{i}^{M}\} (2 \leq i \leq 188), \{x_{189}^{M}, x_{190}^{M}\}, \{x_{i}^{M}\} (191 \leq i \leq 194)\right\rbrace, \\
\mathcal{X}_{2} &=& \left\lbrace \{1\}, \{\chi_{i}\} (2 \leq i \leq 123), \{\chi_{124}, \chi_{125}\}, \{\chi_{i}\} (126 \leq i \leq 194) \right\rbrace.
\end{eqnarray*}
\end{small}
We can see that the group $M$ has at least $5$ supercharacter
theories as $m(M)$, $M(M)$, $\mathcal{C}_{1} = (\mathcal{X}_{1},
\mathcal{K}_{1})$,  $\mathcal{C}_{2} =
(\mathcal{X}_{2},\mathcal{K}_{2})$ and  $\mathcal{C}_{3} =
\mathcal{C}_{1} \vee \mathcal{C}_{2}$. This shows that $s(M) \geq
5$.
\end{itemize}
This completes the proof.
\end{proof}

\begin{thm}
The Pariahs have at least five supercharacter theories.
\end{thm}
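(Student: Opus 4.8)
The plan is to handle the six pariah groups $J_1$, $J_3$, $J_4$, $O'N$, $Ru$ and $Ly$ one at a time, following verbatim the scheme of the three preceding theorems. For a fixed pariah $G$ I would first load its character table from the GAP library and sort it with \texttt{CharacterTableWithSortedCharacters}, and then scan the sorted table for a conjugate pair $\{\chi,\overline{\chi}\}$ of non-real irreducible characters whose dual pair $\{x^G,(x^{-1})^G\}$ of non-real classes is \emph{isolated}, meaning that the row of $\chi$ and the column of $x^G$ each carry at most two non-real entries. Whenever such data is found, Lemma \ref{11} immediately produces a supercharacter theory that coincides with $m(G)$ except that the single pair $\{\chi,\overline{\chi}\}$ and the single pair $\{x^G,(x^{-1})^G\}$ have been merged.

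To reach the bound $s(G)\geq 5$ I would locate two different isolated pairs, obtaining two supercharacter theories $\mathcal{C}_1=(\mathcal{X}_1,\mathcal{K}_1)$ and $\mathcal{C}_2=(\mathcal{X}_2,\mathcal{K}_2)$, and then apply \cite[Proposition 2.16]{10} to get $\mathcal{C}_3=\mathcal{C}_1\vee\mathcal{C}_2$, in which both pairs are merged simultaneously. Finally I would check that $m(G)$, $M(G)$, $\mathcal{C}_1$, $\mathcal{C}_2$, $\mathcal{C}_3$ are pairwise distinct: $\mathcal{C}_1$ and $\mathcal{C}_2$ merge different pairs, $\mathcal{C}_3$ is strictly coarser than each, $m(G)$ is the finest partition, and $M(G)$ has only two parts. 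For $J_3$, $J_4$, $O'N$ and $Ru$, whose tables carry well-behaved conjugate pairs of complex characters, this is entirely routine and identical in spirit to the computations carried out above for the Monster sections.

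The only genuine content is verifying that two isolated pairs actually exist, and I expect the real obstacle to be the Lyons group $Ly$ (and, to a lesser degree, $J_1$). The character field $\mathbb{Q}(A)$ of $Ly$ is of high degree over $\mathbb{Q}$ — it sees the element orders $31$, $37$ and $67$ — so a single irreducible character may fail to be real on an entire Galois orbit of size greater than two, whereupon the at-most-two-non-real-entries hypothesis of Lemma \ref{11} breaks down; in the extreme case the group could be ambivalent (every class equal to its inverse), and then Lemma \ref{11} yields nothing at all.

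For such a group the remedy is Lemma \ref{12}(2). Since every pariah has irrational character values, the group $\Gamma=Gal(\mathbb{Q}(A)/\mathbb{Q})$ is non-trivial, and the partitions of $Irr(G)$ and $Con(G)$ into orbits of $\Gamma$, and, by the same argument, into orbits of the subgroups of $\Gamma$, are supercharacter theories. Because the character field of $Ly$ contains several independent irrationalities, the subgroup lattice of $\Gamma$ is rich enough to supply at least three distinct non-trivial theories, which together with $m(G)$ and $M(G)$ again give $s(G)\geq 5$. Combining the Lemma \ref{11} construction where isolated pairs are available with the Galois construction of Lemma \ref{12} where they are not, one obtains five supercharacter theories for every pariah, which establishes the theorem.
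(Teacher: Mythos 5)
Your overall skeleton --- two explicitly constructed theories $\mathcal{C}_1$, $\mathcal{C}_2$, their join $\mathcal{C}_3=\mathcal{C}_1\vee\mathcal{C}_2$ via \cite[Proposition 2.16]{10}, plus $m(G)$ and $M(G)$ --- is exactly the paper's, but your allocation of tools to groups has a genuine gap: for the four groups you call ``entirely routine'' ($J_3$, $J_4$, $O'N$, $Ru$), two isolated conjugate pairs do not exist, so Lemma 2.1 cannot produce two distinct theories. Concretely, take $J_3$. By Brauer's permutation lemma the number of non-real irreducible characters equals the number of non-real conjugacy classes. In $J_3$ the Sylow $19$-normalizer is $19{:}9$; the complement has odd order, hence does not contain the inverting automorphism, so the two classes of elements of order $19$ satisfy $19A^{-1}=19B$, and one checks from the table that every other class of $J_3$ is real. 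Thus $J_3$ has exactly one conjugate pair $\{\chi,\overline{\chi}\}$ of non-real irreducible characters and exactly one non-real class pair; Lemma 2.1 therefore yields exactly one supercharacter theory, your $\mathcal{C}_2$ does not exist, and your construction proves only $s(J_3)\geq 3$ --- the same predicament as the ambivalent group $J_2$, for which the paper can only conjecture $s(J_2)=3$. The same scarcity is visible in the paper's own proof: for $J_3$, $O'N$ and $Ru$ the second theory merges a Galois orbit of length three (for $J_3$, the classes $9A$, $9B$, $9C$), for $J_4$ both theories merge only triples, and for $Ly$ quintuples; none of these merges is an instance of Lemma 2.1. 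Note also that your ``extreme case'' is the actual case: $J_1$ is ambivalent, its irrationalities ($b_5$ and the real cubic irrationality on the order-$19$ classes) all being real, so Lemma 2.1 gives nothing there, exactly as you feared.

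The good news is that your fallback is the method that works for all six groups, and it is what the paper's explicit partitions implicitly are: orbit partitions of subgroups of $\Gamma=Gal(\mathbb{Q}(A)/\mathbb{Q})$. Each pariah's character field contains at least two independent irrationalities (for $J_1$, $\sqrt{5}$ and the cubic subfield of $\mathbb{Q}(\zeta_{19})$; for $J_3$, $\sqrt{-19}$ and the cubic irrationality on the $9$-classes), so the orbit theories of two independent subgroups $H_1$, $H_2$ and of $H_1H_2$ give three distinct nontrivial theories, whence $s(G)\geq 5$. Two caveats if you repair the proof this way: the paper's Lemma 2.2(2) is stated only for the full Galois group, so the passage to proper subgroups, while standard and implicit in the Diaconis--Isaacs discussion the paper cites, must be stated and justified rather than waved at; and you must still verify on the actual tables that the chosen subgroups yield partitions that are pairwise distinct and different from $M(G)$. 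As written, however, the proposal assigns the wrong tool to four of the six groups and would not close the theorem without this repair.
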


\begin{proof}
The Pariahs are six sporadic groups $J_1$, $O^\prime{N}$, $J_3$,
$Ru$, $J_4$ and $Ly$. Our main proof will consider six separate
cases as follows:
\begin{itemize}

\item \textit{Janko group $J_{1}$}. The first Janko group $J_1$ has exactly $15$ conjugacy classes and irreducible characters. Set $Irr(J_{1}) = \{  \chi_i\}_{1 \leq i \leq 15}$ and
$Con(J_{1}) = \{  x_i^{J_{1}}\}_{1 \leq i \leq 15}$.
\begin{footnotesize}
\begin{eqnarray*}
\mathcal{K}_{1} &=& \left\lbrace \{1\}, \{x_{i}^{J_{1}}\} (2 \leq
i \leq 3), \{x_{4}^{J_{1}}, x_{5}^{J_{1}}\}, \{x_{i}^{J_{1}}\} (6
\leq i \leq 7), \{x_{8}^{J_{1}}, x_{9}^{J_{1}}\},
\{x_{10}^{J_{1}}\},
\{x_{11}^{J_{1}}, x_{12}^{J_{1}}\}, \{x_{i}^{J_{1}}\} (13 \leq i \leq 15)\right\rbrace, \\
\mathcal{X}_{1} &=& \left\lbrace \{1\}, \{\chi_{2}, \chi_{3}\},
\{\chi_{i}\} (4 \leq i \leq 6), \{\chi_{7}, \chi_{8}\},
\{\chi_{i}\} (9 \leq i \leq 12), \{\chi_{13}, \chi_{14}\},
\{\chi_{15}\} \right\rbrace,\\
\mathcal{K}_{2} &=& \left\lbrace \{1\}, \{x_{i}^{J_{1}}\} (2 \leq i \leq 12), \{x_{13}^{J_{1}}, x_{14}^{J_{1}}, x_{15}^{J_{1}}\} \right\rbrace, \\
\mathcal{X}_{2} &=& \left\lbrace \{1\}, \{\chi_{i}\} (2 \leq i
\leq 8), \{\chi_{9}, \chi_{10}, \chi_{11}\}, \{\chi_{i}\} (12
\leq i \leq 15) \right\rbrace.
\end{eqnarray*}
\end{footnotesize}
Since  $m(J_{1})$, $M(J_{1})$, $\mathcal{C}_{1} =
(\mathcal{X}_{1}, \mathcal{K}_{1})$,  $\mathcal{C}_{2} =
(\mathcal{X}_{2},\mathcal{K}_{2})$ and $\mathcal{C}_{3} =
\mathcal{C}_{1} \vee \mathcal{C}_{2}$ are supercharacter theories
of $J_1$, $s(J_1) \geq 5$.

\item {\textit{The O$^{\prime}$Nan group $O^{\prime}N$}}. The O$^{\prime}$Nan group $O^{\prime}N$ has exactly $30$ conjugacy classes and irreducible characters.
Set $Irr(O^{\prime}N) = \{  \chi_i\}_{1 \leq i \leq 30}$ and
$Con(O^{\prime}N) = \{  x_i^{O^{\prime}N}\}_{1 \leq i \leq 30}$.
\begin{small}
\begin{eqnarray*}
\mathcal{K}_{1} &=& \left\lbrace \{1\}, \{x_{i}^{O^{\prime}N}\}
(2 \leq i \leq 21), \{x_{22}^{O^{\prime}N}, x_{23}^{O^{\prime}N},
x_{24}^{O^{\prime}N}\},
\{x_{i}^{O^{\prime}N}\} (25 \leq i \leq 30) \right\rbrace, \\
\mathcal{X}_{1} &=& \left\lbrace \{1\}, \{\chi_{i}\} (2 \leq i
\leq 25), \{\chi_{26}, \chi_{27}, \chi_{28}\}, \{\chi_{i}\} (29
\leq i \leq 30) \right\rbrace, \\
\mathcal{K}_{2} &=& \left\lbrace \{1\}, \{x_{i}^{O^{\prime}N}\} (2 \leq i \leq 26), \{x_{27}^{O^{\prime}N}, x_{28}^{O^{\prime}N}\}, \{x_{i}^{O^{\prime}N}\}
(29 \leq i \leq 30) \right\rbrace, \\
\mathcal{X}_{2} &=& \left\lbrace \{1\}, \{\chi_{i}\} (2 \leq i
\leq 28), \{\chi_{29}, \chi_{30}\}\right\rbrace.
\end{eqnarray*}
\end{small}
Apply Lemma \ref{11} and  \cite[Proposition 2.16]{10} to deduce
that $\mathcal{C}_{1} = (\mathcal{X}_{1}, \mathcal{K}_{1})$,
$\mathcal{C}_{2} = (\mathcal{X}_{2},\mathcal{K}_{2})$ and
$\mathcal{C}_{3} = \mathcal{C}_{1} \vee \mathcal{C}_{2}$ are
supercharacter theories of $O^{\prime}N$. Hence $s(O^{\prime}N)
\geq 5$, as desired.

\item {\textit{The Janko group $J_{3}$}}. We assume that
$Irr(J_{3}) = \{  \chi_i\}_{1 \leq i \leq 21}$ and $Con(J_{3}) =
\{  x_i^{J_{3}}\}_{1 \leq i \leq 21}$. Define:
\begin{small}
\begin{eqnarray*}
\mathcal{K}_{1} &=& \left\lbrace \{1\}, \{x_{i}^{J_{3}}\} (2 \leq i \leq 9), \{x_{10}^{J_{3}}, x_{11}^{J_{3}}, x_{12}^{J_{3}}\}, \{x_{i}^{J_{3}}\} (13 \leq i \leq 21) \right\rbrace, \\
\mathcal{X}_{1} &=& \left\lbrace \{1\}, \{\chi_{i}\} (2 \leq i
\leq 13), \{\chi_{14}, \chi_{15}, \chi_{16}\}, \{\chi_{i}\} (17
\leq i \leq 21) \right\rbrace, \\
\mathcal{K}_{2} &=& \left\lbrace \{1\}, \{x_{i}^{J_{3}}\} (2 \leq i \leq 19), \{x_{20}^{J_{3}}, x_{21}^{J_{3}}\} \right\rbrace, \\
\mathcal{X}_{2} &=& \left\lbrace \{1\}, \{\chi_{2}, \chi_{3}\},
\{\chi_{i}\} (4 \leq i \leq 21) \right\rbrace.
\end{eqnarray*}
\end{small}
Since  $m(J_{3})$, $M(J_{3})$, $\mathcal{C}_{1} =
(\mathcal{X}_{1}, \mathcal{K}_{1})$,  $\mathcal{C}_{2} =
(\mathcal{X}_{2},\mathcal{K}_{2})$ and $\mathcal{C}_{3} =
\mathcal{C}_{1} \vee \mathcal{C}_{2}$ are supercharacter theories
of $J_3$, $s(J_3) \geq 5$.

\item {\textit{The Rudvalis group $Ru$}}. Set $Irr(Ru) = \{  \chi_i\}_{1 \leq i \leq 36}$ and $Con(Ru) = \{  x_i^{Ru}\}_{1 \leq i \leq 36}$ and
define:
\begin{small}
\begin{eqnarray*}
\mathcal{K}_{1} &=& \left\lbrace \{1\}, \{x_{i}^{Ru}\} (2 \leq i \leq 34), \{x_{35}^{Ru}, x_{36}^{Ru}\}\right\rbrace, \\
\mathcal{X}_{1} &=& \left\lbrace \{1\}, \{\chi_{i}\} (2 \leq i
\leq 33), \{\chi_{34}, \chi_{35}\}, \{\chi_{36}\} \right\rbrace,\\
\mathcal{K}_{2} &=& \left\lbrace \{1\}, \{x_{i}^{Ru}\} (2 \leq i \leq 31), \{x_{32}^{Ru}, x_{33}^{Ru}, x_{34}^{Ru}\}, \{x_{i}^{Ru}\} (35 \leq i \leq 36) \right\rbrace, \\
\mathcal{X}_{2} &=& \left\lbrace \{1\}, \{\chi_{i}\} (2 \leq i
\leq 16), \{\chi_{17}, \chi_{18}, \chi_{19}\}, \{\chi_{i}\} (20
\leq i \leq 36) \right\rbrace.
\end{eqnarray*}
\end{small}
Apply again Lemma \ref{11} and  \cite[Proposition 2.16]{10} to
deduce that the Rudvalis group $Ru$ has at least five
supercharacter theories as $m(Ru)$, $M(Ru)$, $\mathcal{C}_{1} =
(\mathcal{X}_{1}, \mathcal{K}_{1})$, $\mathcal{C}_{2} =
(\mathcal{X}_{2},\mathcal{K}_{2})$ and  $\mathcal{C}_{3} =
\mathcal{C}_{1} \vee \mathcal{C}_{2}$. Hence $s(Ru) \geq 5$, as
required.

\item {\textit{The Janko group $J_{4}$}}. This group has exactly $62$ conjugacy classes and irreducible characters. Define
$Irr(J_{4}) = \{  \chi_i\}_{1 \leq i \leq 62}$, $Con(J_{4}) = \{
x_i^{J_{4}}\}_{1 \leq i \leq 62}$ and define:
\begin{footnotesize}
\begin{eqnarray*}
\mathcal{K}_{1} &=& \left\lbrace \{1\}, \{x_{i}^{J_{4}}\} (2 \leq i \leq 49), \{x_{50}^{J_{4}}, x_{51}^{J_{4}}, x_{52}^{J_{4}}\}, \{x_{i}^{J_{4}}\} (53 \leq i \leq 62)\right\rbrace, \\
\mathcal{X}_{1} &=& \left\lbrace \{1\}, \{\chi_{i}\} (2 \leq i
\leq 52), \{\chi_{53}, \chi_{54}, \chi_{55}\}, \{\chi_{i}\} ( 56
\leq i \leq 62) \right\rbrace, \\
\mathcal{K}_{2} &=&
\left\lbrace \{1\}, \{x_{i}^{J_{4}}\} (2 \leq i \leq 42),
\{x_{43}^{J_{4}}, x_{44}^{J_{4}}, x_{45}^{J_{4}}\},
\{x_{i}^{J_{4}}\} (46 \leq i \leq 49),
 \{x_{50}^{J_{4}}, x_{51}^{J_{4}}, x_{52}^{J_{4}}\},  \{x_{i}^{J_{4}}\} (53 \leq i \leq 62) \right\rbrace, \\
\mathcal{X}_{2} &=& \left\lbrace \{1\}, \{\chi_{i}\} (2 \leq i
\leq 52), \{\chi_{53}, \chi_{54}, \chi_{55}\}, \{\chi_{56},
\chi_{57}, \chi_{58}\}, \{\chi_{i}\} (59 \leq i \leq 62)
\right\rbrace.
\end{eqnarray*}
\end{footnotesize}
The Janko group $J_{4}$ has at least five supercharacter theories
as $m(J_{4})$, $M(J_{4})$, $\mathcal{C}_{1} = (\mathcal{X}_{1},
\mathcal{K}_{1})$,  $\mathcal{C}_{2} =
(\mathcal{X}_{2},\mathcal{K}_{2})$ and $\mathcal{C}_{3} =
\mathcal{C}_{1} \vee \mathcal{C}_{2}$. Therefore, $s(J_4) \geq 5$.

\item \textit{The Lyons group $Ly$}. The Lyons group $Ly$ has exactly $53$ irreducible characters and conjugacy classes. Set $Irr(Ly) = \{  \chi_i\}_{1 \leq i \leq 53}$
and $Con(Ly) = \{  x_i^{Ly}\}_{1 \leq i \leq 53}$ and define:
\begin{footnotesize}
\begin{eqnarray*}
\mathcal{K}_{1} &=& \left\lbrace \{1\}, \{x_{i}^{Ly}\} (2 \leq i
\leq 37), \{x_{38}^{Ly}, x_{39}^{Ly}, x_{40}^{Ly}, x_{41}^{Ly},
x_{42}^{Ly}\},
\{x_{i}^{Ly}\} (43 \leq i \leq 53)\right\rbrace, \\
\mathcal{X}_{1} &=& \left\lbrace \{1\}, \{\chi_{i}\} (2 \leq i
\leq 38), \{\chi_{39}, \chi_{40}, \chi_{41}, \chi_{42},
\chi_{43}\}, \{\chi_{i}\} ( 44 \leq i \leq 53) \right\rbrace, \\
\mathcal{K}_{2} &=& \left\lbrace \{1\}, \{x_{i}^{Ly}\} (2 \leq i \leq 37), \{x_{38}^{Ly}, x_{39}^{Ly}, x_{40}^{Ly}, x_{41}^{Ly}, x_{42}^{Ly}\}, \{x_{i}^{Ly}\} (43 \leq i \leq 50), \{x_{51}^{Ly}, x_{52}^{Ly}, x_{53}^{Ly}\}\right\rbrace, \\
\mathcal{X}_{2} &=& \left\lbrace \{1\}, \{\chi_{i}\} (2 \leq i
\leq 25), \{\chi_{26}, \chi_{27}, \chi_{28}\}, \{\chi_{i}\} (29
\leq i \leq 38),
\{\chi_{39}, \chi_{40}, \chi_{41}, \chi_{42}, \chi_{43}\}, \{\chi_{i}\} (44 \leq i \leq 53) \right\rbrace.
\end{eqnarray*}
\end{footnotesize}
Since the group $Ly$ has at least five supercharacter theories as
$m(Ly)$, $M(Ly)$, $\mathcal{C}_{1} = (\mathcal{X}_{1},
\mathcal{K}_{1})$,  $\mathcal{C}_{2} =
(\mathcal{X}_{2},\mathcal{K}_{2})$ and $\mathcal{C}_{3} =
\mathcal{C}_{1} \vee \mathcal{C}_{2}$, $s(Ly) \geq 5$.
\end{itemize}
This completes the proof.
\end{proof}

\begin{lem}
The  Janko group $J_{2}$ has at least three supercharacter
theories.
\end{lem}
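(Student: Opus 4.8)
The plan is to produce three pairwise distinct supercharacter theories of $J_2$, namely the two trivial ones $m(J_2)$ and $M(J_2)$ together with a single non-trivial theory coming from the Galois construction of Lemma~\ref{12}(2). The starting observation, which also explains why $J_2$ must be treated separately from the groups above, is that $J_2$ is \emph{ambivalent}: every $\chi \in Irr(J_2)$ is real valued and every class satisfies $x^{J_2} = (x^{-1})^{J_2}$. Hence the non-real character hypothesis of Lemma~\ref{11} is never met, and that lemma---the engine behind all the five-theory proofs above---is simply unavailable here. The one arithmetic feature that survives is that $J_2$ is \emph{not} rational: reading off the character table (say via GAP or the ATLAS), the only irrationality occurring is $\sqrt5$, arising from the values on the elements of orders $5$, $10$ and $15$, so that $\mathbb{Q}(A) = \mathbb{Q}(\sqrt5)$.

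Next I would invoke Lemma~\ref{12}(2) with $\Gamma = \mathrm{Gal}(\mathbb{Q}(\sqrt5)/\mathbb{Q})$, a group of order $2$. Taking $\mathcal{X}$ to be the set of $\Gamma$-orbits on $Irr(J_2)$ and $\mathcal{K}$ the set of $\Gamma$-orbits on $Con(J_2)$, Lemma~\ref{12}(2) guarantees that $\mathcal{C} = (\mathcal{X},\mathcal{K})$ is a supercharacter theory of $J_2$; in particular the balance condition $|\mathcal{X}| = |\mathcal{K}|$ is automatic and need not be rechecked. Because $|\Gamma| = 2$, each orbit has size $1$ or $2$: the rational characters and rational classes are fixed and contribute singleton blocks, while the finitely many $\sqrt5$-irrational characters, and likewise the irrational classes, are swapped in pairs by the non-trivial element of $\Gamma$.

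Finally I would check that $\mathcal{C}$ differs from both trivial theories. Since at least one $\Gamma$-orbit genuinely has size $2$ (irrational values do occur), $\mathcal{C}$ is a proper coarsening of $m(J_2)$, so $\mathcal{C} \neq m(J_2)$; and since $J_2$ has rational irreducible characters besides the trivial one, the partition $\mathcal{X}$ retains singleton blocks other than $\{\chi_1\}$ and is therefore strictly finer than the two-block partition of $M(J_2)$, giving $\mathcal{C} \neq M(J_2)$. Consequently $\{m(J_2), M(J_2), \mathcal{C}\}$ is a set of three distinct elements of $Sup(J_2)$, and $s(J_2) \geq 3$.

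The only real work is bookkeeping rather than conceptual: one must confirm from the character table exactly which rows and columns carry the $\sqrt5$ irrationality and, crucially, that no further irrationalities intervene, so that indeed $\mathbb{Q}(A) = \mathbb{Q}(\sqrt5)$ and $\Gamma$ acts with orbits of size at most $2$. This is a finite verification, most easily performed with the GAP procedure described earlier in this section. I expect the main subtlety to be exactly this rationality check---ruling out hidden irrationalities (for instance on the order-$7$ or order-$8$ classes) that would enlarge $\mathbb{Q}(A)$ and change the orbit structure---rather than anything in the construction of $\mathcal{C}$ itself.
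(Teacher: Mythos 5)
Your proposal is correct, and in fact it constructs exactly the same third supercharacter theory as the paper does, just by a different justification. The paper's proof simply writes down an explicit pair $(\mathcal{X},\mathcal{K})$ for $J_2$ -- five pairs of conjugacy classes (the $5A/5B$, $5C/5D$, $10A/10B$, $10C/10D$, $15A/15B$ classes) and five pairs of characters (the two $14$'s, two $21$'s, two $70$'s, two $189$'s, two $224$'s), all remaining blocks singletons -- and asserts that ``a simple investigation shows'' this is a supercharacter theory; that partition is precisely the orbit partition of $\mathrm{Gal}(\mathbb{Q}(\sqrt5)/\mathbb{Q})$ that you obtain from Lemma~\ref{12}(2). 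What your route buys is that nothing needs to be verified by hand or by machine: the lemma guarantees both the compatibility conditions and the balance $|\mathcal{X}|=|\mathcal{K}|$ (via Brauer's permutation lemma), and your observation that $J_2$ is ambivalent cleanly explains what the paper leaves implicit, namely why Lemma~\ref{11} -- the engine behind every ``at least five'' proof in Section~2 -- cannot be used here and why $J_2$ only gets the weaker bound $s(J_2)\geq 3$. One small simplification is available to you: the exact identification $\mathbb{Q}(A)=\mathbb{Q}(\sqrt5)$ is only needed if you want the precise orbit structure; for the inequality $s(J_2)\geq 3$ itself it suffices that some character value is irrational (so $\mathcal{C}$ strictly coarsens $m(J_2)$) and that some nontrivial character, e.g.\ the rational one of degree $36$, is fixed by $\Gamma$ (so $\mathcal{X}$ has more than two blocks and $\mathcal{C}\neq M(J_2)$), which makes your worry about ``hidden irrationalities'' immaterial for this lemma.
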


\begin{proof}
A simple investigation shows that the supercharacter theories of
the Janko group $J_2$ are $m(J_{2})$, $M(J_{2})$, $\mathcal{C} =
(\mathcal{X}, \mathcal{K})$ such that
\begin{footnotesize}
\begin{eqnarray*}
\mathcal{K} &=& \left\lbrace \{1\}, \{x_{i}^{J_{2}}\} (2 \leq i
\leq 6), \{x_{7}^{J_{2}},x_{8}^{J_{2}}\},
\{x_{9}^{J_{2}},x_{10}^{J_{2}}\}, \{x_{i}^{J_{2}}\}
 (11 \leq i \leq 14), \{x_{15}^{J_{2}},x_{16}^{J_{2}}\}, \{x_{17}^{J_{2}},x_{18}^{J_{2}}\}, \{x_{19}^{J_{2}}\}, \{x_{20}^{J_{2}},x_{21}^{J_{2}}\} \right\rbrace, \\
\mathcal{X} &=& \left\lbrace \{1\}, \{\chi_{2}, \chi_{3}\},
\{\chi_{4}, \chi_{5}\}, \{\chi_{i}\} (6 \leq i \leq 7),
\{\chi_{8}, \chi_{9}\},\{\chi_{i}\}
 (10 \leq i \leq 13), \{\chi_{14}, \chi_{15}\}, \{\chi_{16}, \chi_{17}\}, \{\chi_{i}\} (18 \leq i \leq 20)\right\rbrace,
\end{eqnarray*}
\end{footnotesize}
proving the lemma.
\end{proof}

We run a GAP program to be sure that $s(J_2) = 3$, but our
program after some days stopped, because it needs a huge amount
of RAM.

\begin{conj}
$s(J_2) = 3$.
\end{conj}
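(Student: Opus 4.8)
The plan is to turn the conjecture into a finite, genuinely checkable computation by exploiting the structure that the naive program ignored. The first reduction is that a supercharacter theory $(\mathcal{X},\mathcal{K})$ is completely determined by its superclass partition $\mathcal{K}$: once $\mathcal{K}$ is fixed, the only possible $\mathcal{X}$ is the coarsest partition of $Irr(J_2)$ on which every weighted sum $\sigma_X=\sum_{\psi\in X}\psi(1)\psi$ is forced to be constant on the blocks of $\mathcal{K}$, and $(\mathcal{X},\mathcal{K})$ is a supercharacter theory precisely when $|\mathcal{X}|=|\mathcal{K}|$. Hence it suffices to enumerate the admissible partitions $\mathcal{K}$ of the $21$ conjugacy classes. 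Equivalently, via the standard dictionary between supercharacter theories and the conjugacy-class association scheme of $J_2$ (whose adjacency algebra is $Z(\mathbb{C}J_2)$), the admissible $\mathcal{K}$ are exactly the \emph{fusions} of that scheme: partitions whose superclass sums $\hat K=\sum_{C\in K}\sum_{x\in C}x$ span a subalgebra of $Z(\mathbb{C}J_2)$ that is closed under the involution induced by $x\mapsto x^{-1}$. The obstruction the authors hit is that a blind walk over all set partitions of the $20$ non-identity classes visits the Bell number $B_{20}\approx 5\times 10^{13}$ candidates, which cannot be held in memory.

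First I would compute, from the sorted character table of $J_2$, the class multiplication constants $a_{ijk}$ defined by $\hat C_i\,\hat C_j=\sum_k a_{ijk}\hat C_k$. The fusion condition then becomes a purely \emph{local} numerical test: a partition $\mathcal{K}=\{K_1,\dots,K_s\}$ is a fusion if and only if, for every ordered pair of blocks $K_a,K_b$, the coefficient $\sum_{C_i\in K_a,\,C_j\in K_b} a_{ijk}$ is constant as $C_k$ ranges over any single block $K_c$. Because this constraint can be evaluated on a partial partition and a forbidden merge detected immediately, it supports an aggressive backtracking search in which the effective branching is tiny. I would seed and further prune that search with two necessary conditions: by Brauer's permutation lemma the inverse map on classes matches complex conjugation on characters, so each block $K$ must have $K^{-1}$ again a block; and any merge must respect the power maps and the field of values so that some $\sigma_X$ can be constant on it. Since the field of character values of $J_2$ is $\mathbb{Q}(\sqrt{5})$, these constraints force the finest non-discrete fusion to group exactly the five $\sqrt{5}$-conjugate class pairs, which is precisely the theory $\mathcal{C}$ of the preceding lemma; indeed $\mathcal{C}$ is a Galois supercharacter theory of the type produced by Lemma \ref{12}(2).

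To organize the enumeration conceptually and to keep it short, I would use the join-semilattice structure. Since the join of two supercharacter theories is again one (\cite[Proposition 2.16]{10}) while $m(J_2)$ and $M(J_2)$ are the finest and coarsest theories, $Sup(J_2)$ is a join-semilattice with bottom $m(J_2)$ and top $M(J_2)$, and every element is a join of the \emph{atoms}, that is, the fusions covering $m(J_2)$. It therefore suffices to (a) enumerate all atoms by the pruned search above and (b) close the result under join. I expect this to show that $\mathcal{C}$ is the unique atom and that the only fusion properly coarser than $\mathcal{C}$ is $M(J_2)$; then $Sup(J_2)=\{m(J_2),\mathcal{C},M(J_2)\}$ is a three-element chain and $s(J_2)=3$, matching the three theories already exhibited.

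The hard part will not be any single structural step but guaranteeing that the pruned search is simultaneously \emph{exhaustive} and \emph{memory-bounded}, which is exactly where the authors' program failed. Proving that no fusion is missed requires verifying that the local structure-constant test is also sufficient for the global subalgebra-and-involution condition; this is standard for association-scheme fusions but must be invoked with care so that the backtracking provably visits every consistent merge. Keeping memory bounded requires never materializing the list of candidate partitions, storing only the current partial merge together with its admissible refinements, which the local constraints keep short. With these two points secured, the remaining work is a finite verification whose only possible outputs are $m(J_2)$, $\mathcal{C}$, and $M(J_2)$, establishing the conjecture.
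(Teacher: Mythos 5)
The paper does not prove this statement at all: it is stated as a conjecture precisely because the authors' own exhaustive GAP computation ran out of memory, so there is no ``paper proof'' to match. Your proposal does not prove it either. Its sound part is the reduction of superclass partitions to fusions of the class algebra: by Diaconis--Isaacs/Hendrickson, a partition $\mathcal{K}$ of $Con(J_2)$ with $\{1\}$ a block is the superclass partition of a (then uniquely determined) supercharacter theory if and only if the span of its superclass sums is a subalgebra of $Z(\mathbb{C}J_2)$, and your local structure-constant test is the correct reformulation of that condition. But after setting this up, the argument ends with ``I expect this to show that $\mathcal{C}$ is the unique atom and that the only fusion properly coarser than $\mathcal{C}$ is $M(J_2)$.'' Those two assertions \emph{are} the conjecture; announcing them as the anticipated output of an unexecuted search establishes nothing. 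Likewise, the claim that inverse-closure, power maps and the field of values $\mathbb{Q}(\sqrt{5})$ ``force'' the finest non-discrete fusion to consist of the five $\sqrt{5}$-conjugate class pairs is unjustified: these are necessary conditions only, and nothing a priori prevents a fusion that merges rational classes; only a completed computation can exclude that.

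There is also a concrete mathematical error in the one structural reduction you use to keep the search small. In a finite join-semilattice with least element, every element is a join of join-\emph{irreducible} elements, not of atoms; $Sup(J_2)$ ordered by refinement has no reason to be atomistic. Your own expected answer is a counterexample: in the chain $m(J_2) \preceq \mathcal{C} \preceq M(J_2)$, the top theory $M(J_2)$ is join-irreducible, is not an atom, and is not a join of atoms. Consequently the plan ``(a) enumerate all atoms, (b) close under join'' can never certify $s(J_2)=3$: it would not even generate $M(J_2)$, and it is structurally incapable of ruling out a hypothetical theory lying strictly between $\mathcal{C}$ and $M(J_2)$, or one incomparable to $\mathcal{C}$. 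To be exhaustive, the backtracking search must range over \emph{all} partitions passing the fusion test (all coarsenings of the discrete partition), which is exactly the computation whose feasibility was the obstacle in the first place; your pruning ideas may well make it tractable, but that has to be demonstrated by actually running it and reporting the result, not assumed.
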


\section{Supercharacter Theory Construction for Alternating and Suzuki  Groups}

The intention of this section is to move a step towards a
classification of  the finite simple groups with exactly three
and four supercharacter theories. We start with the cyclic group
of order $p$.

\begin{lem}\label{12}
Suppose  $p$ is prime. Then,
\begin{enumerate}
\item $s(Z_{p}) = 3$ if and only if $p = 5$,

\item $s(Z_{p}) = 4$ if and only if $p$ is a Sophie Germain prime.
\end{enumerate}
\end{lem}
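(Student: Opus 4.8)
The plan is to reduce the entire statement to the single identity $s(Z_p) = d(p-1)$, after which both parts are elementary number theory. For an abelian group $A$ a supercharacter theory is the same datum as a Schur ring (S-ring) over $A$: the superclass partition $\mathcal{K}$ is a partition of $A$ whose class sums span a subalgebra of the group algebra closed under inversion, and the supercharacter partition $\mathcal{X}$ is the dual S-ring on $Irr(A)$, the condition $|\mathcal{X}| = |\mathcal{K}|$ being automatic from S-ring duality. The prototypes of such theories are exactly the automorphism-orbit theories: for any subgroup $H \le \mathrm{Aut}(Z_p) \cong Z_{p-1}$, the $H$-orbits on $Con(Z_p)$ and on $Irr(Z_p)$ form a supercharacter theory, since $\mathrm{Aut}(Z_p)$ acts on classes and characters compatibly with the character pairing (the Galois case $H = \mathrm{Gal}(\mathbb{Q}(A)/\mathbb{Q})$ of the preceding lemma being one instance). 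Since $\{0\}$ is always a block, the content is the partition of $Z_p^{*}$.

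The crux is to show that $Z_p$ admits no exotic theory beyond these orbit theories. Here I would invoke the classical theorem of Burnside on transitive permutation groups of prime degree: the automorphism group of any S-ring over $Z_p$ contains the regular image of $Z_p$, so as a transitive group of prime degree $p$ it is either $2$-transitive, forcing the block set $\{\{0\}, Z_p^{*}\}$, i.e. $M(Z_p)$, or it is contained in the affine group $Z_p \rtimes \mathrm{Aut}(Z_p)$, in which case its orbits on $Z_p^{*}$ are precisely those of a subgroup $H \le \mathrm{Aut}(Z_p)$. Thus every supercharacter theory of $Z_p$ is the orbit theory of a unique $H \le Z_{p-1}$ (note $M(Z_p)$ itself is the case $H = Z_{p-1}$, and $m(Z_p)$ the case $H = 1$). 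Distinct subgroups give distinct theories because the block containing $1 \in Z_p^{*}$ is exactly $H$, which recovers $H$. Counting subgroups of the cyclic group $Z_{p-1}$ then yields $s(Z_p) = d(p-1)$.

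It remains to solve $d(p-1)=3$ and $d(p-1)=4$ in primes $p$. For (1), $d(n)=3$ forces $n=\ell^{2}$ with $\ell$ prime, so $p=\ell^{2}+1$; for odd $\ell$ this is even and composite, leaving $\ell=2$ and $p=5$, which indeed has $s(Z_5)=d(4)=3$. For (2), $d(n)=4$ forces $n=\ell^{3}$ or $n=\ell m$ with $\ell\ne m$ prime. The case $p-1=\ell^{3}$ gives $p=(\ell+1)(\ell^{2}-\ell+1)$, composite for $\ell\ge 2$, hence vacuous; in the remaining case, since $p-1$ is even one factor is $2$, so $p-1=2q$ with $q$ an odd prime, i.e. $p=2q+1$ with $q=(p-1)/2$ prime. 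This is exactly the condition that $p$ be a safe prime, equivalently that $(p-1)/2$ be a Sophie Germain prime, the degenerate coincidence $q=2$ being absorbed into part (1) as $p=5$.

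The main obstacle is the classification step of the second paragraph: ruling out any S-ring over $Z_p$ other than the automorphism-orbit ones. Everything downstream is the subgroup count of a cyclic group together with a one-line divisor analysis. I would therefore devote the bulk of the write-up either to citing the Burnside/Schur classification of S-rings over $Z_p$ cleanly, or, for a self-contained argument, to showing that Galois-stability of the supercharacter values forces each superclass to be a union of $\mathrm{Aut}(Z_p)$-orbits and then that S-ring multiplicativity pins each down to the orbits of a single subgroup.
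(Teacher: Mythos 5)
Your route is genuinely different from the paper's, and most of it is sound. The paper's entire reduction consists of one citation: $s(Z_p)=d(p-1)$ holds by \cite[Theorem 6.32 and Table 1]{9}, after which it performs the same divisor count you do. You instead propose to reprove that identity from scratch via Schur rings. Your first step (supercharacter theories of an abelian group coincide with S-rings over it, the condition $|\mathcal{X}|=|\mathcal{K}|$ being automatic from duality) is correct and is available from \cite{10}. Your closing number theory is also correct, and in fact more careful than the paper's own: you observe that the condition in part (2) is that $p$ be a safe prime, equivalently that $(p-1)/2$ be a Sophie Germain prime, with the degenerate case $q=2$ absorbed into part (1) as $p=5$. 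This is the right reading; the lemma's literal wording ``$p$ is a Sophie Germain prime'' contradicts the paper's own definition (for instance $s(Z_7)=d(6)=4$ although $2\cdot 7+1=15$ is composite, and $s(Z_5)=3$ although $5$ is both Sophie Germain and safe).

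The genuine gap is exactly where you sensed it, and it is worth being precise about why the Burnside argument cannot close it. In the affine case you obtain $\mathrm{Aut}(\mathcal{A})=Z_p\rtimes H$ with $H\leq\mathrm{Aut}(Z_p)$, and since the stabilizer of $0$ fixes each basic set setwise (it preserves the neighborhood of $0$ in each Cayley graph), every superclass is a \emph{union} of $H$-orbits, i.e.\ of cosets of $H$ in $Z_p^{*}$. It does not follow that every superclass is a \emph{single} coset: nothing in the permutation-group analysis excludes a fusion, a basic set consisting of two or more cosets, and such a fused partition yields no contradiction with Burnside's theorem, since its automorphism group would again be some affine group. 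Ruling out fusions is precisely the content of Schur's theorem that every S-ring over $Z_p$ is cyclotomic, and it cannot be extracted from Burnside's theorem alone (historically the dependence runs the other way: Schur built S-ring theory to reprove Burnside). So your sentence ``thus every supercharacter theory of $Z_p$ is the orbit theory of a unique $H$'' silently assumes the S-ring is Schurian, which is the thing to be proved. The repair is your own fallback, which does work: either cite the classification of S-rings over $Z_p$, or run the Schur--Wielandt power-map argument --- for $\gcd(m,p)=1$ the map $\sum a_g g\mapsto\sum a_g g^{m}$ preserves any S-ring over $Z_p$, hence permutes its basic sets; if $T$ is the basic set containing $1$ and $t\in T$, then $tT$ is a basic set containing $t$, so $tT=T$, whence $T$ is a subgroup $H\leq Z_p^{*}$ and every other basic set $sT=sH$ is a coset of $H$. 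With that in place, counting subgroups of $Z_{p-1}$ gives $s(Z_p)=d(p-1)$ and your divisor analysis finishes the proof.
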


\begin{proof}
It is clear that $p$ is an odd prime. By  \cite[Theorem 6.32 and
Table 1]{9}, $s(Z_p) = d(p-1)$.

\begin{enumerate}
\item \textit{$s(Z_{p}) = 3$.} In this case,
$s(Z_{p}) = d(p-1)$ and so $p \geq 5$. Since $p-1$ is an even
integer, the case of $p > 5$ cannot be occurred and so $p = 5$, as
desired.

\item \textit{$s(Z_{p}) = 4$.} Since $d(p-1) = 4$, $p-1 = q^3$, $q$ is prime, or $p-1 = 2r$,
where $r$ is prime. If $p-1 = q^3$ then $q = 2$ and $p = 9$, a
contradiction. So, $p-1 = 2r$, where $r$ is prime. This shows
that $p$ is a Sophie Germain prime.
\end{enumerate}
This completes the proof.
\end{proof}

The following well-known results are crucial in the
classification of alternating simple groups with exactly three or
four supercharacter theories.

\begin{thm}\label{4.1} The following are hold:
\begin{enumerate}
\item {\rm(Berggren \cite{5})} Every irreducible characters of the
alternating group $A_n$ are real valued if and only if $n \in \{
1, 2, 5, 6, 10, 14\}$.

\item {\rm(Grove \cite[Proposition 8.2.1]{85})} If $K$ is a conjugacy
class in $S_{n}$, $K \subset A_{n}$, and $\sigma \in K$, then $K$
is a conjugacy class in $A_{n}$ if and only if some odd elements
of $S_{n}$ commutes with $\sigma$; if that is not the case, then
the conjugacy class $K$ splits as the union of two
$A_{n}-$classes, each of size $|K|/2$. If $\lambda$ is the
(partition) type of $\sigma$ then $K$ splits if and only if the
parts of $\lambda$ are all odd and all different from each other.

\item Suppose $x \in A_n$  is a product of $r$ pair-wise disjoint
cycles including all fixed points as singleton cycles. Then $x^{A_{n}}$
is non-real if and only if $\sum_{j=1}^{m}\frac{r_j-1}{2}$  is
odd.
\end{enumerate}
\end{thm}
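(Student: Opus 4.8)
The statement to prove is part (3) of Theorem \ref{4.1}: for $x \in A_n$ written as a product of $r$ pairwise disjoint cycles (fixed points counted as singletons), the class $x^{A_n}$ is non-real if and only if $\sum_{j=1}^{m}\frac{r_j-1}{2}$ is odd. (Here I read the $r_j$ as the cycle lengths of $x$, so that the sum runs over the $m = r$ cycles.) The guiding principle is the standard dictionary: for a class $C$ in a group $H$, $C$ is real precisely when $x$ and $x^{-1}$ are $H$-conjugate. So the whole problem reduces to deciding when $x^{-1}$ is conjugate to $x$ \emph{inside} $A_n$, as opposed to only inside $S_n$.

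My first step is to record the $S_n$-side of the picture, which is trivial: in $S_n$ every element is conjugate to its inverse, since $x$ and $x^{-1}$ always have the same cycle type. Hence there is \emph{always} a permutation $\tau \in S_n$ with $\tau x \tau^{-1} = x^{-1}$. The entire question is therefore whether such a conjugating $\tau$ can be chosen to be \emph{even}. This is exactly the kind of "does the class split / does an odd element do the job" dichotomy governed by part (2) of the same theorem (Grove), so my second step is to bring part (2) to bear: the centralizer $C_{S_n}(x)$ either contains an odd permutation or it does not, and the two regimes must be handled separately.

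The third and central step is the parity computation. The set of all $\tau \in S_n$ with $\tau x \tau^{-1} = x^{-1}$ is a coset of $C_{S_n}(x)$; every element of this coset has the same sign precisely when $C_{S_n}(x) \le A_n$, i.e. when the class does \emph{not} split (by Grove's criterion this is the generic case where the cycle lengths are not all odd and distinct). In that situation $x^{A_n} = x^{S_n} \cap A_n$ and the class is real in $A_n$ iff the conjugating coset lies in $A_n$, so I must compute the sign of one explicit inverting permutation. For a single $k$-cycle the natural inverting permutation is the product of $\lfloor k/2\rfloor = \frac{k-1}{2}$ transpositions (pairing $i$ with $k+1-i$), whose sign is $(-1)^{(k-1)/2}$. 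Taking the product over all cycles, the sign of the canonical inverting permutation is $(-1)^{\sum (r_j-1)/2}$. Thus $x^{-1}$ is reached from $x$ by an even permutation exactly when $\sum \frac{r_j-1}{2}$ is even, which is the asserted criterion (non-real $\iff$ the sum is odd). The remaining bookkeeping is to confirm that in the split case the same sign count is consistent — but there the additional freedom (cycles all odd and distinct forces specific parities on the individual $\frac{r_j-1}{2}$ summands) has to be reconciled with the two half-classes, and this is where I would be most careful.

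\emph{The main obstacle.} The hard part is the split case: when the cycle lengths are all odd and all distinct, $C_{S_n}(x)$ is generated by the cycles themselves and lies inside $A_n$, so $x^{S_n}$ breaks into two $A_n$-classes of equal size, and I must verify that the parity of the canonical inverting permutation correctly detects whether these two half-classes are swapped or fixed by inversion. The delicate point is that "all cycle lengths odd" forces each $\frac{r_j-1}{2}$ to be an integer whose parity is $\frac{r_j-1}{2} \bmod 2$, and I must check that the sum's parity still agrees with reality of the (now genuinely smaller) class $x^{A_n}$ rather than of $x^{S_n}$. I expect this to be a short but genuinely case-sensitive argument, and it is the step where I would slow down rather than wave my hands; everything else is the clean parity count above.
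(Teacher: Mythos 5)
A preliminary remark: the paper never actually proves Theorem \ref{4.1} --- parts (1) and (2) are quoted from Berggren and Grove, and part (3) is stated without proof or citation --- so your attempt has to be measured against the standard argument the statement requires, not against a proof in the text. Your plan is the right one, and your key computation is correct: reality of $x^{A_n}$ is equivalent to the existence of an \emph{even} permutation conjugating $x$ to $x^{-1}$; the inverting permutations form a coset $\tau_0 C_{S_n}(x)$; and the cycle-by-cycle reversal $\tau_0$ has sign $(-1)^{\sum_j (r_j-1)/2}$. The genuine flaw is that your central paragraph inverts Grove's dichotomy and so attaches each conclusion to the wrong case. By part (2), the class splits exactly when no odd permutation centralizes $x$, i.e.\ exactly when $C_{S_n}(x)\le A_n$; hence the regime where the coset has constant sign is the \emph{split} case (cycle lengths all odd and distinct), not, as you write, the non-split one, and the identity $x^{A_n}=x^{S_n}\cap A_n$ belongs to the non-split case. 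In the non-split case $C_{S_n}(x)$ contains an odd permutation $c$, so the coset contains both $\tau_0$ and $\tau_0 c$, which have opposite signs; the coset therefore always meets $A_n$ and the class is automatically real --- there is no constant sign to compute, and the criterion you derive there (``$x^{-1}$ is even-reachable iff $\sum_j(r_j-1)/2$ is even'') is simply false in that regime. The sign computation decides only the split case, which your closing paragraph describes correctly but defers as ``bookkeeping''; as written, the middle and final paragraphs contradict each other, and neither direction of the equivalence is actually completed.

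Once the labels are fixed, your argument proves a \emph{corrected} statement, and you should say so explicitly, because part (3) as printed is false without the splitting hypothesis. Take $x$ of cycle type $[3,3,3]$ in $A_9$: then $\sum_j (r_j-1)/2 = 3$ is odd, but the parts are not distinct, so by part (2) the class does not split, $x^{A_9}=x^{S_9}\ni x^{-1}$, and the class is real, contradicting the stated criterion. The true statement --- and the only form in which the paper later uses it, where the distinct-odd-parts condition and the parity of $\frac{n-r}{2}$ are imposed together --- is: $x^{A_n}$ is non-real if and only if the cycle lengths of $x$ are all odd and pairwise distinct \emph{and} $\sum_j (r_j-1)/2=\frac{n-r}{2}$ is odd. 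Your proof skeleton, with the split and non-split cases correctly assigned (non-split $\Rightarrow$ real; split $\Rightarrow$ real iff the reversal $\tau_0$ is even), yields exactly this.
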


\begin{thm}
The simple alternating group $A_{n}$ has exactly three
supercharacter theories if and only if $n = 5$ or $7$. There is
no simple alternating groups with exactly four supercharacter
theories.
\end{thm}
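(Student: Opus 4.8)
The plan is to prove the claim in two directions: first exhibit three distinct supercharacter theories whenever $n = 5$ or $7$ to get $s(A_n) \geq 3$, and (for the sufficiency of the first claim and the entirety of the second claim) to show that for all other $n$ we have $s(A_n) \neq 3$ and $s(A_n) \neq 4$. The natural organizing tool is Theorem \ref{4.1}, whose three parts control exactly the features that generate supercharacter theories: part (1) tells us when $A_n$ has \emph{no} non-real irreducible characters (forcing the Galois/complex-conjugation construction of Lemma \ref{12} to collapse into $m(A_n)$), while parts (2) and (3) tell us precisely which conjugacy classes split and which become non-real, hence how rich the field $\mathbb{Q}(A)$ and its Galois group $\Gamma$ are.

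First I would handle the small cases $n = 5, 7$ directly. For $A_5$ and $A_7$ one can read off from the character tables (or from Lemma \ref{12}(2) applied to $A_7$, which has non-real irreducible characters) that besides the two trivial theories $m(A_n)$ and $M(A_n)$ there is exactly one further theory, namely the one from complex conjugation pairing, and I would verify these three are pairwise distinct so that $s(A_n) \geq 3$. The more delicate half is showing $s(A_5) = s(A_7) = 3$ exactly and ruling out everything else; here the strategy is to argue that the only mechanisms available for building nontrivial theories in these groups are the conjugation theory from Lemma \ref{12}(1) and the Galois theory from Lemma \ref{12}(2), and then count how many distinct theories these produce.

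For the negative direction I would split into cases according to Theorem \ref{4.1}(1). When $n \in \{1,2,5,6,10,14\}$ (the totally-real cases), complex conjugation is trivial on $Irr(A_n)$, so that source of theories vanishes; among these $A_5$ already gives three, while $A_6, A_{10}, A_{14}$ must be shown to have either fewer relevant theories or, more likely, strictly more than four by exhibiting extra Lemma~\ref{11}-type or automorphism-induced theories, and the degenerate small cases $A_1, A_2$ are trivial groups. When $n \notin \{1,2,5,6,10,14\}$ there exist non-real irreducible characters, and the number of complex-conjugate pairs among both characters and classes grows with $n$; the idea is that as soon as there are at least two independent conjugate pairs one can form the conjugation theory, the full Galois theory, and various intermediate and $\vee$-joined theories, pushing $s(A_n)$ strictly above four for all large $n$. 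The computational backbone is that $s(A_n) = 3$ or $4$ demands an extremely rigid character-table structure, which Theorem \ref{4.1}(3) shows occurs only for the tiny exceptional orders.

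The hard part will be the \emph{uniqueness} claims — proving that no $A_n$ has exactly four supercharacter theories and that only $n = 5, 7$ give exactly three — because this is not produced by a single construction but by ruling out the existence of \emph{any} additional theory, which in principle requires understanding the entire lattice of set-partitions compatible with the character table. I expect the argument to proceed by showing that whenever $A_n$ admits one nontrivial non-central theory it admits several (via joins with the conjugation theory of Lemma~\ref{12} and via the growth of $\Gamma = \mathrm{Gal}(\mathbb{Q}(A)/\mathbb{Q})$ predicted by Theorem~\ref{4.1}(3)), so the count jumps past four; the residual finite list of candidate $n$ would then be eliminated by direct GAP computation on their character tables, mirroring the verification style used for the sporadic groups above.
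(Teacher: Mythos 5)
Your high-level skeleton — verify $n=5,7$ computationally, and kill every other $n$ by producing strictly more than four supercharacter theories — is indeed the shape of the paper's proof, and like the paper you ultimately fall back on GAP for the exactness claims. But the negative direction of your proposal has genuine gaps. The most serious one is that your case split (all characters real vs.\ some non-real, then ``at least two independent conjugate pairs'') skips the case that carries the real weight of the paper's argument: alternating groups with \emph{exactly one} pair of non-real irreducible characters. The paper proves, by a partition-theoretic analysis of splitting classes carried out modulo $4$ (using Theorem \ref{4.1}(2)--(3)), that this happens precisely for $n \in \{7,9,11,15,18,19,23\}$; this enumeration is the combinatorial core of the proof, and nothing in your proposal produces it. Moreover, your assertion that the number of conjugate pairs ``grows with $n$'' is false — it is not monotone (for instance $A_{10}$ and $A_{14}$ have no non-real characters while $A_9$ has exactly one pair) — so you cannot conclude that all large $n$ admit two independent pairs without an argument of exactly this kind.

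Second, for the groups in that intermediate case other than $A_7$ (namely $A_9, A_{11}, A_{15}, A_{18}, A_{19}, A_{23}$) your proposed mechanism is simply unavailable: there is only one complex-conjugate pair, so the complex-conjugation theory, the Galois theory, and their joins need not produce more than three theories in total, and a priori such a group could have exactly three or four — which would falsify the theorem. The paper closes this hole with a second, independent source of theories: each of these groups also has a pair of Galois-conjugate irrational \emph{real} characters, giving a second Lemma-\ref{11}-type theory $\mathcal{C}_2$ in addition to the one from the non-real pair, whence $m$, $M$, $\mathcal{C}_1$, $\mathcal{C}_2$, $\mathcal{C}_1 \vee \mathcal{C}_2$ give $s \geq 5$ (for $n>24$, where two non-real pairs do exist, the paper instead invokes the Robinson--Thompson theorem together with the James--Kerber fact that each row and column contains at most one irrational pair, so that Lemma \ref{11} applies). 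Relatedly, your plan to use ``Lemma \ref{11}-type'' theories for the totally real groups $A_6, A_{10}, A_{14}$ cannot work as stated, since Lemma \ref{11} requires a non-real character value; the paper handles those groups by explicit pairings of Galois-conjugate irrational real characters and split classes. Finally, your first idea for the exactness at $A_5, A_7$ — that every supercharacter theory must arise from the conjugation or Galois constructions, so one can just count them — is unjustified: no such classification of supercharacter theories exists in the paper or elsewhere, and only your fallback to exhaustive computation on the character table is sound, which is precisely what the paper does.
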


\begin{proof}
Suppose $n \geq 5$. It is easy to see that the alternating groups
$A_5$ and $A_7$ have exactly three supercharacter theories. Our
main proof will consider three separate cases as follows:

\begin{enumerate}
\item \textit{All character values of $A_n$ are real}.  Since $n \geq 5$, Theorem
\ref{4.1}(1) implies that $n = 5, 6, 10$ or $14.$ In what follows
three non-trivial supercharacter theories for the alternating
groups $A_6, A_{10}$ and $A_{14}$ are presented.
\begin{enumerate}
\item By a GAP program, one can see that the group $A_{5}$ has exactly $3$ supercharacter theories
$m(A_{5})$, $M(A_{5})$ and $\mathcal{C} = (\mathcal{X},
\mathcal{K})$ such that
\begin{small}
\begin{eqnarray*}
\mathcal{K} &=& \left\lbrace \{1\}, \{x_{2}^{A_{5}}\}, \{x_{3}^{A_{5}}\}, \{x_{4}^{A_{5}}, x_{5}^{A_{5}}\}\right\rbrace, \\
\mathcal{X} &=& \left\lbrace \{1\}, \{\chi_{2}, \chi_{3}\},
\{\chi_{i}\} (4 \leq i \leq 5) \right\rbrace.
\end{eqnarray*}
\end{small}
Here, $Irr(A_{5})$ = $\{  \chi_1, \chi_2, \chi_3, \chi_4,
\chi_{5}\}$ and $Con(A_{5})$ = $\{  x_1^{A_{5}}, x_2^{A_{5}},
x_3^{A_{5}}, x_4^{A_{5}}, x_{5}^{A_{5}}\}$.

\item Suppose $Irr(A_{6})$ = $\{  \chi_1, \chi_2, \ldots,
\chi_{7}\}$ and $Con(A_{6})$ = $\{  x_1^{A_{6}}, x_2^{A_{6}},
\ldots, x_{7}^{A_{6}}\}$. Define $\mathcal{C}_{1} =
(\mathcal{X}_{1}, \mathcal{K}_{1})$ and $\mathcal{C}_{2} =
(\mathcal{X}_{2}, \mathcal{K}_{2})$ as follows:
\begin{small}
\begin{eqnarray*}
\mathcal{K}_{1} &=& \left\lbrace \{1\}, \{x_{2}^{A_{6}}\}, \{x_{3}^{A_{6}}, x_{4}^{A_{6}}\}, \{x_{5}^{A_{6}}\}, \{x_{6}^{A_{6}}, x_{7}^{A_{6}}\}\right\rbrace, \\
\mathcal{X}_{1} &=& \left\lbrace \{1\}, \{\chi_{2}, \chi_{3}\},
\{\chi_{4}, \chi_{5}\}, \{\chi_{6}\}, \{\chi_{7}\}\right\rbrace,\\
\mathcal{K}_{2} &=& \left\lbrace \{1\}, \{x_{2}^{A_{6}}, x_{5}^{A_{6}}\}, \{x_{3}^{A_{6}}, x_{4}^{A_{6}}\}, \{x_{6}^{A_{6}}\}, \{x_{7}^{A_{6}}\}\right\rbrace, \\
\mathcal{X}_{2} &=& \left\lbrace \{1\}, \{\chi_{2}, \chi_{3},
\chi_{7}\}, \{\chi_{4}\}, \{\chi_{5}\}, \{\chi_{6}\}\right\rbrace,
\end{eqnarray*}
\end{small}
If $\mathcal{C}_{3} = \mathcal{C}_{1} \vee \mathcal{C}_{2}$ then
$\mathcal{C}_{3} $ is a  supercharacter theory for $A_6$
different from $\mathcal{C}_1$, $\mathcal{C}_2$, $m(A_{6})$ and
$M(A_{6})$. Therefore, $A_{6}$ has at least five supercharacter
theories.

\item Suppose $Irr(A_{10})$ = $\{  \chi_1, \chi_2, \ldots, \chi_{24}\}$,  $Con(A_{10}) = \{  x_1^{A_{10}}, x_2^{A_{10}}, \ldots,
x_{24}^{A_{10}}\}$, $\mathcal{C}_{1} =
(\mathcal{X}_{1},\mathcal{K}_{1})$, $\mathcal{C}_{2} =
(\mathcal{X}_{2},\mathcal{K}_{2})$ and
 $\mathcal{C}_{3} = \mathcal{C}_{1} \vee \mathcal{C}_{2}$, where
\begin{small}
\begin{eqnarray*}
\mathcal{K}_{1} &=& \left\lbrace \{1\}, \{x_{i}^{A_{10}}\} (2 \leq i \leq 19), \{x_{20}^{A_{10}}, x_{21}^{A_{10}}\}, \{x_{i}^{A_{10}}\} (22 \leq i \leq 24)\right\rbrace, \\
\mathcal{X}_{1} &=& \left\lbrace \{1\}, \{\chi_{i}\} (2 \leq i
\leq 19), \{\chi_{20}, \chi_{21}\}, \{\chi_{i}\} (22 \leq i \leq
24) \right\rbrace,\\
\mathcal{K}_{2} &=& \left\lbrace \{1\}, \{x_{i}^{A_{10}}\} (2 \leq i \leq 22), \{x_{23}^{A_{10}}, x_{24}^{A_{10}}\}\right\rbrace, \\
\mathcal{X}_{2} &=& \left\lbrace \{1\}, \{\chi_{i}\} (2 \leq i
\leq 11), \{\chi_{12}, \chi_{13}\}, \{\chi_{i}\} (14 \leq i \leq
24) \right\rbrace.
\end{eqnarray*}
\end{small}
Then $\mathcal{C}_{i} = (\mathcal{X}_{i},\mathcal{K}_{i})$, $1
\leq i \leq 3$, are three supercharacter theories of $A_{10}$
different from $m(A_{10})$ and $M(A_{10})$.

\item We claim that the alternating group $A_{14}$ has at least $5$ supercharacter
theories. These are $m(A_{14})$, $M(A_{14})$, $\mathcal{C}_{1} =
(\mathcal{X}_{1}, \mathcal{K}_{1})$,  $\mathcal{C}_{2} =
(\mathcal{X}_{2},\mathcal{K}_{2})$ and
 $\mathcal{C}_{3} = \mathcal{C}_{1} \vee \mathcal{C}_{2}$ such that
\begin{small}
\begin{eqnarray*}
\mathcal{K}_{1} &=& \left\lbrace \{1\}, \{x_{i}^{A_{14}}\} (2 \leq i \leq 70), \{x_{71}^{A_{14}}, x_{72}^{A_{14}}\}\right\rbrace, \\
\mathcal{X}_{1} &=& \left\lbrace \{1\}, \{\chi_{i}\} (2 \leq i
\leq 19), \{\chi_{20}, \chi_{21}\}, \{\chi_{i}\} (22 \leq i \leq
72) \right\rbrace,\\
\mathcal{K}_{2} &=& \left\lbrace \{1\}, \{x_{i}^{A_{14}}\} (2 \leq i \leq 67), \{x_{68}^{A_{14}}, x_{69}^{A_{14}}\}, \{x_{i}^{A_{14}}\} (70 \leq i \leq 72)\right\rbrace, \\
\mathcal{X}_{2} &=& \left\lbrace \{1\}, \{\chi_{i}\} (2 \leq i
\leq 56), \{\chi_{57}, \chi_{58}\}, \{\chi_{i}\} (59 \leq i \leq
72) \right\rbrace.
\end{eqnarray*}

\end{small}\end{enumerate}

\item \textit{The alternating group $A_{n}$ has exactly one pair of non-real valued
irreducible character.} In this case, we will prove $ n \in \{7,
9, 11, 15, 18, 19, 23\}$. Since the number of non-real irreducible
characters is equal to the number of conjugacy classes that are
not preserved by the inversion mapping, we need those $n$ for
which $A_n$ has exactly two such conjugacy classes. Suppose $x
\in A_n$. By \cite[Proposition 12.17(2)]{110}, $x^{S_n} = x^{A_n}
\cup (1 2)x(1 2)^{A_n}$ if and only if $C_{S_n}(x) = C_{A_n}(x)$.
By Theorem \ref{4.1}(2), the last one is satisfied if and only if
$\dfrac{n - r}{2}$ is odd, where $r$ is the number of cycles in
decomposition of $x$. Our aim is to find all natural numbers $n$
such that $A_n$ has exactly one non-real class. Our main proof
will consider the following separate cases:
\begin{enumerate}
\item $n \equiv 0 ~(mod ~4)$. If $n \geq 8$ then $[1, n - 1]$ and $[3, n - 3]$ are two partitions with given
properties. So, $n = 4$ which contradicts by our main assumption
that $n \geq 5$.

\item $n \equiv 1~ (mod~ 4)$. If $ n \geq 13 $ then $ [1, 3, n - 4] $ and $ [1, 5, n - 6]
$ are two partitions with this property that all parts are odd and
$\frac{n-3}{2}$ ($r = 3$) is odd. Thus $n = 5$ or $9$. By Theorem
\ref{4.1}(1), the alternating group $A_5$ does not have non-real
class and the alternating group $A_9$ has exactly a unique pair of
non-real class.

\item $n \equiv 2~ (mod~ 4)$. If $ n \geq 22 $ then $ [1, 3, 5, n - 9] $ and $ [1, 3, 7, n - 11]
$ are  two partitions with given properties and so $n = 6, 10,
14$ or $18$. By Theorem \ref{4.1}(1), the alternating groups
$A_6, A_{10}$ and $A_{14}$ don't have non-real conjugacy class,
but the alternating group $A_{18}$ has a unique pair of conjugate
non-real characters.

\item $n \equiv 3~ (mod~ 4)$. If $ n \geq 31 $ then $ [1, 3, 5, 7, n - 16] $ and $ [1, 3, 5, 9, n - 18]
$ are two partitions with this property that all parts are odd and
$\frac{n-5}{2}$ ($r = 5$) is odd. Thus $n = 7, 11, 15, 19, 23$ or
$27$. Since $[27]$ and $[1,5,7,9,11]$ are two partitions with
mentioned properties, the case of $n = 27$ cannot be happened.
Other cases are solution of our problem.
\end{enumerate}

Hence, the alternating group $A_n$, $n \geq 5$, has a unique
non-real conjugacy class if and only if $n \in \{ 7, 9, 11, 15,
18, 19, 23\}$. A simple calculations by GAP shows that the
alternating group $A_7$ has exactly three supercharacter theories
$m(A_{7})$, $M(A_{7})$ and $\mathcal{C} = (\mathcal{X},
\mathcal{K})$, where
\begin{small}
\begin{eqnarray*}
\mathcal{K} &=& \left\lbrace \{1\}, \{x_{i}^{A_{7}}\} (2 \leq i \leq 7), \{x_{8}^{A_{7}}, x_{9}^{A_{7}}\}\right\rbrace, \\
\mathcal{X} &=& \left\lbrace \{1\}, \chi_{2}, \{\chi_{3},
\chi_{4}\}, \{\chi_{i}\} (5 \leq i \leq 9) \right\rbrace.
\end{eqnarray*}
\end{small}
The alternating group $A_{9}$ has at least $5$ supercharacter
theories $m(A_{9})$, $M(A_{9})$, $\mathcal{C}_{1} =
(\mathcal{X}_{1}, \mathcal{K}_{1})$, $\mathcal{C}_{2} =
(\mathcal{X}_{2}, \mathcal{K}_{2})$ and $\mathcal{C}_{3} =
\mathcal{C}_{1} \vee \mathcal{C}_{2}$. The partitions of
conjugacy classes and irreducible characters of $\mathcal{C}_1$
and $\mathcal{C}_2$ are defined as follows:
\begin{small}
\begin{eqnarray*}
\mathcal{K}_{1} &=& \left\lbrace \{1\}, \{x_{i}^{A_{9}}\} (2 \leq i \leq 12), \{x_{13}^{A_{9}},x_{14}^{A_{9}}\}, \{x_{i}^{A_{9}}\} (15 \leq i \leq 18) \right\rbrace, \\
\mathcal{X}_{1} &=& \left\lbrace \{1\}, \{\chi_{2}\}, \{\chi_{3},
\chi_{4}\}, \{\chi_{i}\} (5 \leq i \leq 18)\right\rbrace, \\
\mathcal{K}_{2} &=& \left\lbrace \{1\}, \{x_{i}^{A_{9}}\} (2 \leq i \leq 16), \{x_{17}^{A_{9}},x_{18}^{A_{9}}\} \right\rbrace, \\
\mathcal{X}_{2} &=& \left\lbrace \{1\}, \{\chi_{i}\} (2 \leq i
\leq 6), \{\chi_{7}, \chi_{8}\}, \{\chi_{i}\} (9 \leq i \leq
18)\right\rbrace.
\end{eqnarray*}
\end{small}
The alternating groups $A_{11}, A_{15}, A_{18}$, $A_{19}$ and
$A_{23}$ has at least one non-real irreducible character  and one
non-rational irreducible real character. So, by Lemma \ref{11} it
has at least five supercharacter theories $m$, $M$,
$\mathcal{C}_1$, $\mathcal{C}_2$ and $\mathcal{C}_3 =
\mathcal{C}_1 \vee \mathcal{C}_2$, proving this case.

\item \textit{The alternating group $A_{n}$ has at least two pairs of non-real valued
irreducible characters.} We first assume that $n > 24$. If $A =
\{ \chi(x) \ | \ \chi \in Irr(G)\}$ and $\mathbb{Q}(A)$ denotes
the filed generated by $\mathbb{Q}$ and $A$ then by
\cite[Theorem]{112}, the character table of $A_n$ has both
irrational and non-real character values. On the other hand, by
\cite[Theorem 2.5.13]{111}, each row or column of the character
table of $A_n$ contains at most one pair of irrational numbers.
Now by Lemma \ref{11}, we have at least five supercharacter
theories, as required.

Next we assume that $n \leq 24$. Set $\Gamma_1 = \{ 5, 6, 7, 9,
10, 11, 14, 15, 18, 19, 23\}$ and $\Gamma_2 = \{ 8, 12, 13, 16,
17, 20, 21, 22, 24\}$. If $n \in \Gamma_1$ then the number of
supercharacter theories of $A_n$ are investigated in Cases (1)
and (2). So, we have to prove that $s(A_n) \geq 5$, when $n \in
\Gamma_2$. By an easy calculation with GAP, one can see that if
$n \in \Gamma_2$ then $A_n$ has at least two pairs of non-real
valued irreducible characters and by Lemma \ref{11}, $s(A_n) \geq
5$.
\end{enumerate}
Hence the result.
\end{proof}

In the end of this paper we prove that the simple Suzuki group
$Sz(q), q = 2^{2n+1}$ has at least six super character theories.

\begin{thm}
The Suzuki group $Sz(q)$ has at least $6$ supercharacter theories
as: $m(Sz(q))$, $ M(Sz(q)) $, $\mathcal{C}_{1} = (\mathcal{X}_{1},
\mathcal{K}_{1})$, $\mathcal{C}_{2} = (\mathcal{X}_{2},
\mathcal{K}_{2})$, $\mathcal{C}_{3} = (\mathcal{X}_{3},
\mathcal{K}_{3})$ and $\mathcal{C}_{4} = (\mathcal{X}_{4},
\mathcal{K}_{4})$, where
\begin{small}
\begin{eqnarray*}
\mathcal{K}_{1} &=& \{\{1\},\{\rho, \rho^{-1}\}, Con(G(q)) - \{\rho, \rho^{-1}\}\},\\
\mathcal{X}_{1} &=& \{\{1\},\{W_{1}, W_{2}\}, Irr(G(q))-\{W_{1}, W_{2}\}\},\\
\mathcal{K}_{2} &=& \{\{1\},\{\pi_{0}\}, Con(G(q)) - \{\pi_{0}\}\},\\
\mathcal{X}_{2} &=& \{\{1\},\{X_{i}\}, Irr(G(q))-\{X_{i}\}\},\\
\mathcal{K}_{3} &=& \{\{1\},\{\pi_{1}\}, Con(G(q)) - \{\pi_{1}\}\},\\
\mathcal{X}_{3} &=& \{\{1\},\{Y_{j}\}, Irr(G(q))-\{Y_{j}\}\},\\
\mathcal{K}_{4} &=& \{\{1\},\{\pi_{2}\}, Con(G(q)) - \{\pi_{2}\}\},\\
\mathcal{X}_{4} &=& \{\{1\},\{Z_{k}\}, Irr(G(q))-\{Z_{k}\}\}.
\end{eqnarray*}

\end{small}\end{thm}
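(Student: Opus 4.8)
The plan is to argue from Suzuki's explicit generic character table of $G = Sz(q)$, $q = 2^{2n+1}$, rather than from a single machine computation. Writing $r = 2^{n+1}$ (so $r^2 = 2q$), the group has $q+3$ conjugacy classes: the identity, the involution class $\sigma$, the two mutually inverse classes $\rho, \rho^{-1}$ of elements of order $4$, and the semisimple classes furnished by the three cyclic maximal tori $A_0, A_1, A_2$ of pairwise coprime orders $q-1$, $q+r+1$, $q-r+1$ (the last two multiplying to $q^2+1$); dually $Irr(G)$ consists of the trivial and Steinberg characters, the three series $\{X_i\}, \{Y_j\}, \{Z_k\}$ attached to $A_0, A_1, A_2$, and the unique complex-conjugate pair $W_1 = \overline{W_2}$ of degree $\tfrac{r}{2}(q-1)$. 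Against this table I would verify the four nontrivial pairs directly; the two trivial theories $m(Sz(q))$ and $M(Sz(q))$ are available for free.

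First I would dispatch $\mathcal{C}_1$ through Lemma \ref{11}. Because each $N_{G}(A_m)/A_m$ contains the inversion automorphism of $A_m$, every torus-series value has the shape $\epsilon + \epsilon^{-1}$ and is therefore real; the only genuinely non-real irreducibles are $W_1, W_2$, and the only classes carrying a non-real character value are $\rho, \rho^{-1}$. Hence each row and each column of the table has at most two non-real entries (those lying in the $W$-rows and in the $\rho$-columns), so the hypotheses of Lemma \ref{11} hold with $\chi = W_1$ and $x = \rho$, yielding $\mathcal{C}_1 = (\mathcal{X}_1,\mathcal{K}_1)$ at once.

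For $\mathcal{C}_2, \mathcal{C}_3, \mathcal{C}_4$ the key is that each is a three-block pair isolating a single character and a single class. By exactly the computation in the proof of Lemma \ref{11}, the supercharacter-theory axioms then reduce to one constancy requirement: the isolated supercharacter $X_i(1)\,X_i$ (respectively $Y_j(1)\,Y_j$, $Z_k(1)\,Z_k$) must be constant on the complementary superclass $Con(G)\setminus\{1,\pi_m\}$, and the complementary supercharacter inherits the same constancy since it differs from it only by the regular character, which vanishes off the identity. As each torus-series character vanishes on the classes of the other two tori and takes controlled values on the $2$-singular classes, the whole variation of $X_i$ (resp.\ $Y_j, Z_k$) is concentrated on the classes of its own torus $A_m$; the displayed partitions assert that a distinguished class $\pi_m$ absorbs this variation, and confirming this is a finite check of table entries, uniform in $n$. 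The six theories $m, M, \mathcal{C}_1, \mathcal{C}_2, \mathcal{C}_3, \mathcal{C}_4$ are then pairwise distinct, because they isolate different characters and classes, which gives $s(Sz(q)) \geq 6$.

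The main obstacle is precisely that, unlike the sporadic and small alternating cases, the claim cannot be settled by one \textsf{GAP} run: everything must be established uniformly over the infinite family $q = 2^{2n+1}$, directly from Suzuki's parametrised table. The two delicate points are (i) pinning down the generic values on the $2$-singular classes $\sigma, \rho, \rho^{-1}$ and confirming that $W_1, W_2$ are the only non-real irreducibles, so that Lemma \ref{11} genuinely applies; and (ii) verifying, torus by torus, that the isolated character is constant off $\{1,\pi_m\}$. This second constancy is the crux, and it is exactly where the reality pattern of the three coprime-order tori must be controlled; should some series fail to be absorbed by a single class, the corresponding $\mathcal{C}_m$ would be replaced by the coarser pair it refines and recombined via \cite[Proposition 2.16]{10}, still delivering $s(Sz(q)) \geq 6$.
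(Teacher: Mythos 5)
Your handling of $\mathcal{C}_1$ agrees with the paper: after observing that the only non-real entries of Suzuki's table are the four values $W_l(\rho^{\pm 1})=\pm r\sqrt{-1}/2$, Lemma \ref{11} (read, as in all of the paper's applications, with every unlisted class and character kept as a singleton part) yields $\mathcal{C}_1$. The gap is in your treatment of $\mathcal{C}_2,\mathcal{C}_3,\mathcal{C}_4$. You read each of these as a three-block pair isolating a \emph{single} character $X_i$ and a \emph{single} class $\pi_0$, and you reduce the axioms to the constancy of $X_i(1)X_i$ on $Con(G)\setminus\{1,\pi_0\}$, asserting that this is ``a finite check of table entries''. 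Under that reading the check fails identically, not by an oversight: from Table \ref{bcd}, $X_i$ equals $1$ on $\sigma$ and on $\rho^{\pm 1}$, equals $0$ on every $A_1$- and $A_2$-class, and takes the varying values $\varepsilon_0^i(\pi_0^j)$ on the remaining $A_0$-classes; so its variation is \emph{not} concentrated on its own torus, and no single distinguished class can absorb it (likewise $Y_j$ takes $r-1$ at $\sigma$, $-1$ at $\rho$, and $0$ on the other two tori). Your fallback of replacing a failed $\mathcal{C}_m$ by ``the coarser pair it refines'' is vacuous, since the only proper coarsening of a three-block pair is $M(Sz(q))$, so it cannot restore the count to six.

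What makes the theorem true --- and what the paper's terse appeal to Lemma \ref{11} together with Tables \ref{abc} and \ref{bcd} is gesturing at --- is that $\{X_i\}$ and $\{\pi_m\}$ denote whole families: $\mathcal{K}_2$ has one non-trivial part consisting of \emph{all} $q/2-1$ classes of non-identity elements of $A_0$, every other class being a singleton, and $\mathcal{X}_2$ has one non-trivial part consisting of \emph{all} $q/2-1$ characters $X_i$, every other character being a singleton (so $|\mathcal{X}_2|=|\mathcal{K}_2|=q/2+5$). Two computations, absent from your proposal, are then required: (i) each of $1$, $X$, $Y_j$, $Z_k$, $W_l$ is constant on the $A_0$-part (their values there are $1,1,0,0,0$, independently of the class); and (ii) the supercharacter $\sigma=\sum_i X_i(1)X_i$ is constant on the $A_0$-part, because $\sum_i \varepsilon_0^i(\xi_0^j)=\sum_{m\not\equiv 0}\varepsilon_0^{mj}=-1$ (the exponents $\pm i$ exhaust the non-zero residues modulo $q-1$, and $\xi_0^j\neq 1$), whence $\sigma\equiv -(q^2+1)$ there; constancy on singleton parts is automatic, and the complementary character part follows from your regular-character remark. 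The analogous orthogonality sums over $A_1$ and $A_2$ give $\mathcal{C}_3$ and $\mathcal{C}_4$, and this series-summation step is exactly where the argument is uniform in $q$; without it the bound $s(Sz(q))\geq 6$ is not established. (One further caveat on distinctness under the correct reading: for $q=8$ one has $(q-r)/4=1$, so $\mathcal{C}_4$ merges nothing and collapses to $m(Sz(8))$, leaving only five distinct theories in the list; the claim of six distinct theories from this construction is uniform only for $q\geq 32$.)
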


\begin{proof}

The schematic form of the character table of $Sz(q), q =
2^{2n+1}$, is shown in Table \ref{abc}, see  \cite{15,16} for
details.

\begin{table}[htp]
\caption{The Schematic Form of the Character Table of
$Sz(q)$.}\label{abc}
\begin{center}
\begin{tabular}{ccccccc}
Irreducible Characters & Degrees & $\#$Irreducible Characters\\
\hline
$X$ & $q^{2}$ & $1$ \\
$X_{i}$ & $q^{2} + 1$ & $q/2 - 1$  \\
$Y_{j}$ & $(q - r + 1)(q - 1)$ & $(q + r)/4$\\
$Z_{k}$ & $(q + r + 1)(q - 1)$ & $(q - r)/4$\\
$W_{l}$ & $r(q - 1)/2$ & $2$\\ \hline
\end{tabular}
\end{center}
\end{table}

In this table, the first column designates the characters, the
second column indicates the degrees, and the last one is the
number of characters of each degree.

Suppose $2q = r^{2}$. The Suzuki group $Sz(q)$ contains cyclic
groups of order $ q - 1 $, $q + r + 1$ and $ q - r + 1 $. These
subgroups are denoted by $A_{0}$, $ A_{1} $ and $ A_{2} $,
respectively. We also assume that  $ \pi_{i} $ is a typical
non-identity element of $ A_{i} $, $i = 0, 1, 2 $,  $ \sigma =
(0, 1)$ and $ \rho = (1, 0) $.

Let $ \varepsilon_{0} $, $ \varepsilon_{1} $ and  $
\varepsilon_{2} $ be a $ (q - 1)^{th} $, a primitive $ (q + r +
1)^{th} $  and a $ (q - r + 1)^{th} $ root of unity. We also
assume that  $ \xi_{0} $, $ \xi_{1} $ and $ \xi_{2} $ are
generators of $ A_{0} $, $ A_{1} $ and $ A_{2} $, respectively.
Define $ \varepsilon_{0}^{i} $, $ \varepsilon_{1}^{i} $ and $
\varepsilon_{2}^{i} $ as follows:

\begin{eqnarray*}
\varepsilon_{0}^{i}(\xi_{0}^{j}) &=& \varepsilon_{0}^{ij} +
\varepsilon_{0}^{-ij};~~~~(~i = 1, \ldots, q/2 - 1),\\
\varepsilon_{1}^{i}(\xi_{1}^{k}) &=& \varepsilon_{1}^{ik} +
\varepsilon_{1}^{ikq} + \varepsilon_{1}^{-ik} +
\varepsilon_{1}^{-ikq};~~~~(~i = 1, \ldots, q + r),\\
\varepsilon_{2}^{i}(\xi_{2}^{k}) &=& \varepsilon_{2}^{ik} +
\varepsilon_{2}^{ikq} + \varepsilon_{2}^{-ik} +
\varepsilon_{2}^{-ikq}.
\end{eqnarray*}
The functions $ \varepsilon_{0}^{i} $, $ \varepsilon_{1}^{i} $
and $ \varepsilon_{2}^{i} $ are characters of $ A_{0} $, $ A_{1}
$ and $ A_{2} $, respectively. Following Suzuki \cite{15}, the
character table of $Sz(q)$, is computed in Table \ref{bcd}.

\begin{table}[htp]
\caption{The character table of $Sz(q)$.} \label{bcd}
\centering
\begin{tabular}{ccccccc}
$$ & $1$ & $\sigma$ & $\rho, \rho^{-1}$ & $\pi_{0}$ & $\pi_{1}$ & $\pi_{2}$ \\
$X$ & $q^{2}$ & $0$ & $0$ & $1$ & $-1$ & $-1$ \\
$X_{i}$ & $q^{2} + 1$ & $1$ & $1$ & $\varepsilon_{0}^{i}(\pi_{0})$ & $0$ & $0$ \\
$Y_{j}$ & $(q - r + 1)(q - 1)$ & $r - 1$ & $-1$ & $0$ & $-\varepsilon_{1}^{j}(\pi_{0})$ & $0$ \\
$Z_{k}$ & $(q + r + 1)(q - 1)$ & $-r - 1$ & $-1$ & $0$ & $0$ & $-\varepsilon_{2}^{k}(\pi_{0})$ \\
$W_{l}$ & $r(q - 1)/2$ & $-r/2$ & $\pm r\sqrt{-1}/2$ & $0$ & $1$ & $-1$ \\
\end{tabular}
\end{table}

We now apply Lemma \ref{11} to construct four supercharacter
theories given the statement of this theorem. By considering
$m(Sz(q))$ and $M(Sz(q))$, it can be proved that $s(Sz(q)) \geq 6$.
\end{proof}

\noindent{\bf Acknowledgement.} The authors are indebted to
professors Marston Conder, Geoffrey R. Robinson and Jeremy
Rickard for some critical discussion through Group Pub Forum and
MathOverFlow on Theorem 3.3. The research of the authors are
partially supported by the University of Kashan under grant no
572760/1.


\begin{thebibliography}{99}
\bibitem{0}
C. A. M. Andr\'{e}, Basic characters of the unitriangular group,
\textit{J. Algebra} \textbf{175} (1995) 287--319.

\bibitem{1} C. A. M. Andr\'e, Irreducible characters of finite algebra groups, Matrices and Group Representations
Coimbra, 1998 Textos Mat. Ser B 19 (1999), 65--80.

\bibitem{2} C. A. M. Andr\'e, Basic characters of the unitriangular group (for arbitrary primes), \textit{Proc.
Amer. Math Soc.} \textbf{130} (2002), 1943--1954.



\bibitem{5}
J. L. Berggren, Finite groups in which every element is conjugate
to its inverse, \textit{Pacific J. Math.} \textbf{28}(2) (1969)
289--293.

\bibitem{6}
S. Burkett, J. Lamar, M. L. Lewis and C. Wynn, Groups with
exactly two supercharacter theories, \textit{arXiv: 1506.00015v2}
(2015) 1--7.

\bibitem{65} C. C. Chen and Hui-Chin Tang, Full period of a multiple recursive random number generator with $2$ generalized Mersenne prime number,
\textit{J. Inf. Optim. Sci.} \textbf{30} (5) (2009) 1059--1065.

\bibitem{7}
P. Diaconis and I. M. Isaacs, Supercharacters and superclasses
for algebra groups. \textit{Trans. Amer. Math. Soc.} \textbf{360}
(2008) 2359--2392.


\bibitem{85}
L. C. Grove, \textit{Groups and Characters}, Pure and Applied
Mathematics (New York). A Wiley-Interscience Publication. John
Wiley $\&$ Sons, Inc., New York, 1997.

\bibitem{9} A. O. F. Hendrickson, \textit{Supercharacter Theories of Cyclic $p-$Groups}, PhD Thesis, University of Wisconsin, 2008.

\bibitem{10} A. O. F. Hendrickson, Supercharacter theory constructions corresponding to Schur ring products,
\textit{Comm. Algebra} \textbf{40}(12) (2012) 4420--4438.

\bibitem{11} I. M. Isaacs, \textit{Character Theory of Finite Groups}, Dover, New York, 1994.

\bibitem{110} G. James and M. Liebeck, \textit{Representations and Characters of
Groups}, Second edition, Cambridge University Press, New York,
2001.

\bibitem{111} G. James and A. Kerber, \textit{The Representation Theory of
the Symmetric Group}, Encyclopedia of Mathematics and its
Applications, 16. Addison-Wesley Publishing Co., Reading, Mass.,
1981.

\bibitem{112} G. R. Robinson and J. G. Thompson, Sums of squares and the fields ${\bf Q}\sb {A\sb n}$, \textit{J. Algebra} \textbf{174}(1) (1995) 225--228.

\bibitem{12}  H. Shabani, A. R. Ashrafi and M. Ghorbani, Rational character table of some finite gropus, \textit{J. Alg. Sys.} \textbf{3} (2) (2016) 151--169.

\bibitem{13} N. J. A. Sloane, \textit{The On-Line Encyclopedia of Integer Sequences}, published electronically at http://oeis.org, 2014.




\bibitem{15} M. Suzuki, On a class of doubly transitive groups, \textit{Ann. Math.} \textbf{75} (1)  (1962) 105--145.

\bibitem{16} M. Suzuki, A new type of simple groups of finite order, \textit{Proc. Nat. Acad. Sci. U. S. A.} \textbf{46} (1960) 868--870.



\bibitem{19} The GAP Team,  Group, GAP - Groups, Algorithms, and Programming, Version 4.5.5, 2012, http://www.gap-system.org.
\end{thebibliography}
\end{document}